\DeclareMathAlphabet{\mathpzc}{OT1}{pzc}{m}{it}
\newtheorem{thm}{Theorem}[section]
\newtheorem{lem}[thm]{Lemma}
\newtheorem{cor}[thm]{Corollary}
\newtheorem{rem}[thm]{Remark}
\newtheorem{defn}[thm]{Definition}
\newcommand{\bZ}{\mathbb Z}
\newcommand{\bC}{\mathbb C}
\newcommand{\bN}{\mathbb N}
\newcommand{\A}{\mathbb A}
\newcommand{\aut}{\operatorname{Aut_k}}
\newcommand{\td}{\operatorname{tr.deg}}
\newcommand{\gr}{\operatorname{gr}}
\newcommand{\ml}{\operatorname{ML}}
\title{On Double Danielewski Surfaces\\ and\\ the Cancellation Problem}
\author{Neena Gupta$^*$ and Sourav Sen$^\dagger$\\
{\small{\it $^*$Stat-Math Unit, Indian Statistical Institute,}}\\
{\small{\it 203 B.T. Road, Kolkata 700 108, India.}}\\
{\small{\it e-mail : neenag@isical.ac.in}}\\
{\small{\it $^\dagger$Swami Vivekananda Research Centre,}}\\
{\small {\it Ramakrishna Mission Vidyamandira }}\\
{\small {\it P.O. Belur Math, Howrah 711202, India.}}\\
{\small{\it e-mail : sourav.sen3@gmail.com}}
}
\begin{document}

\date{}
\maketitle
\abstract{
We study a two-dimensional family of affine surfaces which are counter-examples to the
Cancellation Problem. We describe the Makar-Limanov invariant of these surfaces, determine their isomorphism classes and characterize
the automorphisms of these surfaces.
}
\smallskip

\noindent
{\small {{\bf Keywords}. Cancellation Problem, Exponential Maps, Makar-Limanov Invariant, Automorphism, Stable Isomorphism.}
	
	\noindent
	{\small {{\bf 2010 MSC}. Primary: 14R05, 14R10; Secondary: 13A50, 13B25, 13A02, 14R20.}}
}

\section{Introduction}
For integral domains $C \subset A$, the notation
$A=C^{[n]}$ will mean that $A=C[t_1, \ldots, t_n]$ for elements
$t_1, \ldots , t_n \in A$ algebraically independent over $C.$

Let $k$ be a field.
A version of the Cancellation Problem asks:

\smallskip
\noindent
{\bf Question 1}: If $A$ and $B$ are two 
finitely generated $k$-algebras such that $A^{[1]}\cong_k B^{[1]}$, does
this necessarily imply that $A \cong_k B$?

\smallskip

For a finitely generated $k$-algebra $B$,  if there exists a $k$-algebra $A$ such that\\
$A^{[1]}\cong_k B^{[1]}$ but $A \ncong_k B$, then we say that $B$ 
does not satisfy the cancellation property. 

The Cancellation Problem is known to have an affirmative solution when $\dim B=1$ (see \cite{AEH}) 
but in higher dimensions there are known counter-examples to this problem.
In \cite{H}, M. Hochster showed that the coordinate ring of the tangent bundle over the real sphere does not satisfy the cancellation property. 
However,  T. Fujita, M. Miyanishi and T. Sugie (\cite{F}, \cite{MS})
proved the cancellation property of $k^{[2]}$ for any field $k$ of characteristic zero 
and P. Russell (\cite{R}) extended their results over perfect fields of arbitrary characteristic.
S.M. Bhatwadekar and the first author (\cite{BG}) established 
the cancellation property of $k^{[2]}$ over any arbitrary field $k$.
When ch. $k>0$ and $r \geq 3$, it has been shown that $B= k^{[r]}$ does not 
satisfy the cancellation property (see \cite{G}, \cite{G2}).
When ch. $k=0$ and $r \geq 3$, it is not known whether the polynomial ring $B= k^{[r]}$
has the cancellation property. 

In 1989, W. Danielewski constructed explicit examples (\cite{Da}) of two-dimensional affine domains
over the field of complex numbers $\bC$ which do not satisfy the cancellation property. For any non-constant 
polynomial $P(Z)$ with distinct roots, Danielewski considered the coordinate ring of the affine surface 
$S_n$ defined by the equation $x^ny-P(z)=0$ in $\A^3_k$. Such rings are known as Danielewski  surfaces. 
It is known that for any pair $(m,n)$ with $m \ne n$, $S_m \ncong S_n$ but $S_m \times \A^1_k \cong S_n \times \A^1_k$ 
(cf. \cite{Fi}, \cite[p. 246]{Fr}).


In this paper, we study a family of two-dimensional affine surfaces $S_{d, e}$ over a field 
$k$ (of any characteristic), defined by a pair of equations 
$$
\{x^{d}y - P(x,z)=0,x^{e}t - Q(x,y,z)=0\} ~\text{in}~ {\A}^4_k,
$$
where $d,e \in{ \bN}$, $P(X,Z)$ is monic in $Z$ and $Q(X,Y,Z)$ is monic in $Y$
with $\deg_ZP(X,Z)\geq 2, \deg_YQ(X,Y,Z)\geq 2$. 
We call them ``double Danielewski surfaces''.
We first compute the $\ml$-invariant of double Danielewski surfaces (Theorem \ref{ml}). 
Next, we  determine the isomorphism classes of these surfaces explicitly  (Theorem \ref{isomclass})
and describe a characterization of their automorphisms (Theorems \ref{auto} and \ref{autoauto}).
We also deduce that no double Danielewski  surface is isomorphic to any Danielewski  surface  (Corollary \ref{cdistinct}).
Finally, we prove a stable isomorphism property of  the coordinate rings of double Danielewski surfaces 
under certain regularity assumptions (Theorem \ref{stable}) and hence deduce that such rings do not satisfy the cancellation property
 (Corollary \ref{cpd}).

\section{Exponential maps}
In this section we recall the basics of exponential maps.
 
\smallskip
\noindent
{\bf Definition.}
Let $k$ be a field, $A$ be a $k$-algebra and 
let $\phi: A \to A^{[1]}$ be a 
$k$-algebra homomorphism. For an indeterminate $U$ over $A$, let the notation 
$\phi_U$ denote the map $\phi: A \to A[U]$.  
$\phi$ is said to be an {\it exponential map on $A$} if $\phi$ satisfies the following two properties:
\begin{enumerate}
 \item [\rm (i)] $\varepsilon_0 \phi_U$ is identity on $A$, where 
 $\varepsilon_0: A[U] \to A$ is the evaluation at $U = 0$.
 \item[\rm (ii)] $\phi_V \phi_U = \phi_{V+U}$, where 
 $\phi_V: A \to A[V]$ is extended to a homomorphism 
 $\phi_V: A[U] \to A[V,U]$ by  setting $\phi_V(U)= U$.
 \end{enumerate}
The subring 
$A^{\phi}= \{a \in A\,| \,\phi (a) = a\}$ of $A$ is said to be the ring of invariants of $\phi$. 

An exponential map $\phi$ is said to be {\it non-trivial} if $A^{\phi} \neq A$.
For an affine domain $A$ over a field $k$, let 
${\rm EXP} (A)$ denote the set of all exponential maps on $A$.
The  {\it Makar-Limanov invariant} of 
$A$ is a subring of $A$ defined by
$$
{\rm ML} (A) = \bigcap_{\phi \in {\rm EXP} (A)} A^{\phi}.
$$ 

We summarise below some useful properties of an exponential map $\phi$ 
(cf. \cite[p. 1291--1292]{C} and \cite[Lemma 2.8]{G2}).

\begin{lem}\label{exp3}
Let $A$ be an affine domain over a field $k$. Suppose that there exists a non-trivial
exponential map $\phi$ on $A$. Then the following statements hold:
 \begin{enumerate}
\item [\rm (i)] $A^{\phi}$ is factorially closed in $A$.
\item [\rm (ii)] $A^{\phi}$ is algebraically closed in $A$.
\item [\rm (iii)] If $x \in A$ is such that $\deg_U\phi(x)$ is of minimal positive degree,
and $c$ is the leading coefficient of $U$ in $\phi(x)$, then $c \in A^{\phi}$ and 
$A[c^{-1}]= A^{\phi}[c^{-1}][x]$.
\item[\rm (iv)] $\td_k (A^{\phi}) = \td_k (A)-1$. 
\item [\rm (v)] If $\td_k(A)=1$ and $\tilde{k}$ is the algebraic closure of $k$ in $A$,
then  $A= \tilde{k}^{[1]}$ and $A^{\phi} = \tilde{k}$. 
\item[\rm (vi)] For any multiplicative subset $S$ of $A^{\phi}\setminus \{0\}$, 
$\phi$ extends to a non-trivial exponential map $S^{-1}\phi$ on $S^{-1}A$ by setting 
$(S^{-1}\phi) (a/s) := \phi(a)/s$ for $a \in A$, $s \in S$; 
and the ring of invariants of $S^{-1}\phi$ is $S^{-1}(A^{\phi})$.
\end{enumerate}
\end{lem}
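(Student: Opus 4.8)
The plan is to build every part on a single device: the $U$-degree function $a \mapsto \deg_U \phi(a)$ on $A$, with the convention $\deg_U 0 = -\infty$. Writing $\phi(a) = \sum_{i\ge 0} \phi_i(a)U^i$, I first record its basic calculus. Axiom (i) gives $\phi_0 = \mathrm{id}_A$, so for $a \neq 0$ we have $\deg_U \phi(a) = 0$ exactly when $\phi(a) = a$, i.e. $a \in A^{\phi}$; and since $A[U]$ is a domain the leading $U$-coefficients multiply, whence $\deg_U \phi(ab) = \deg_U \phi(a) + \deg_U \phi(b)$. Part (i) is then immediate: if $ab \in A^{\phi}$ with $a,b \neq 0$, then $\deg_U \phi(a) + \deg_U \phi(b) = 0$ forces both summands to vanish, so $a,b \in A^{\phi}$. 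For (ii), I would take $a \in A$ algebraic over $A^{\phi}$ and apply $\phi$ to a dependence relation $\sum_j c_j a^j = 0$ with $c_j \in A^{\phi}$: if $\deg_U \phi(a) = n > 0$, the surviving terms $c_j \phi(a)^j$ have pairwise distinct $U$-degrees $jn$, so the top one cannot cancel, a contradiction; hence $n = 0$ and $a \in A^{\phi}$.

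The technical heart is (iii). First I would show the leading coefficient $c = \phi_n(x)$ lies in $A^{\phi}$ by extracting the top $U$-coefficient from the cocycle identity $\phi_V \phi_U = \phi_{V+U}$: comparing the coefficients of $U^n$ on the two sides of $\phi_V(\phi_U(x)) = \phi_{V+U}(x)$ yields $\phi_V(c) = c$, i.e. $c \in A^{\phi}$. By (ii), $x \notin A^{\phi}$ is transcendental over $A^{\phi}$, so $A^{\phi}[c^{-1}][x]$ is a polynomial ring sitting inside $A[c^{-1}]$. The reverse inclusion $A[c^{-1}] \subseteq A^{\phi}[c^{-1}][x]$ I would prove by induction on $m = \deg_U \phi(a)$: the base case $m = 0$ means $a \in A^{\phi}$ by the characterization above, and for $m > 0$ the leading coefficient of $\phi(a)$ again lies in $A^{\phi}$ by the same cocycle argument, so after inverting $c$ one can subtract a suitable element of $A^{\phi}[c^{-1}][x]$, assembled from powers of $x$ whose $\phi$-images have controlled leading terms, to strictly lower the $U$-degree and then invoke the induction hypothesis. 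This degree-reduction step, together with keeping the bookkeeping of leading coefficients consistent, is where I expect the real work to lie.

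The remaining parts follow formally. Part (iv) is immediate from (iii), since localization preserves transcendence degree: $\td_k A = \td_k A[c^{-1}] = \td_k A^{\phi}[c^{-1}][x] = \td_k A^{\phi} + 1$. For (v), (iv) gives $\td_k A^{\phi} = 0$, so $A^{\phi}$ is algebraic over $k$; being algebraically closed in $A$ by (ii) and containing $k$, it coincides with $\tilde{k}$, which is a field. Crucially $c \in A^{\phi} = \tilde{k}$ is then a unit of $A$, so $A[c^{-1}] = A$ and (iii) collapses to $A = \tilde{k}[x] = \tilde{k}^{[1]}$ with $x$ transcendental over $\tilde{k}$. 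Finally (vi) is a routine check: $S^{-1}\phi$ inherits axioms (i)--(ii) from $\phi$ and remains non-trivial, while for the invariants $S^{-1}(A^{\phi}) \subseteq (S^{-1}A)^{S^{-1}\phi}$ is clear, and conversely $\phi(a)/s = a/s$ gives $s'(\phi(a) - a) = 0$ for some $s' \in S$, whence the domain property of $A[U]$ forces $\phi(a) = a$, i.e. $a \in A^{\phi}$.
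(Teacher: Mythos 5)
The paper itself offers no proof of this lemma---it is quoted from the cited references [C] and [G2]---so I am judging your sketch against the standard arguments there, which it follows in outline: the multiplicative degree function $a\mapsto\deg_U\phi(a)$, extraction of leading coefficients from the cocycle identity, induction on $U$-degree, and localization. On that basis parts (i), (ii), (iv), (v) and (vi) are correct and essentially complete, as is the claim $c\in A^{\phi}$ in (iii) (your comparison of the $U^n$-coefficients in $\phi_V\phi_U(x)=\phi_{V+U}(x)$ is exactly right).

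The genuine gap is in the degree-reduction step of (iii). Every element of $A^{\phi}[c^{-1}][x]$ has $U$-degree a multiple of $n$: for $\sum_i e_i x^i$ with $e_i\in A^{\phi}[c^{-1}]$ the leading terms $e_ic^iU^{ni}$ sit in distinct degrees and cannot cancel. So to kill the leading term $bU^m$ of $\phi(a)$ by subtracting $e x^q$ you need $m=qn$; the induction closes only if you first prove that the minimal positive degree $n$ divides $\deg_U\phi(a)$ for \emph{every} $a\in A$. This divisibility is not bookkeeping---it is the actual content of (iii), and it is exactly the point you defer with ``where I expect the real work to lie.'' In characteristic $0$ it is easy because $n=1$: the coefficient of $U^{n-1}$ in the cocycle identity gives $\phi_V(\phi_{n-1}(x))=\phi_{n-1}(x)+ncV$, so $n>1$ would produce an element of positive degree $1<n$. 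But this paper needs the lemma over fields of arbitrary characteristic, where $n$ can be a power of $p$ and the argument requires more: from $\phi_V(\phi_i(a))=\sum_{j\ge i}\binom{j}{i}\phi_j(a)V^{j-i}$ and minimality of $n$ one deduces $\binom{m}{m-l}\equiv 0 \pmod p$ for all $0<l<n$, and a Lucas-type analysis of base-$p$ digits then forces $n\mid m$. Without some version of this step the proposed induction on $\deg_U\phi(a)$ cannot proceed, so the proof of (iii) (and hence of (iv) and (v), which rest on it) is incomplete as written.
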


We recall below the concept of an admissible proper $\bZ$-filtration 
on an affine domain (cf. \cite{C}). 

\smallskip
\noindent
{\bf Definition.}
Let $A$ be an affine domain over a field $k$.
A collection of $k$-linear subspaces $\{A_n\}_{n \in \bZ}$ of $A$ 
is said to be a {\it proper $\bZ$-filtration} if it satisfies the following conditions: 
\begin{enumerate}
\item [\rm (i)] $A_n \subseteq A_{n+1}$ for all $n \in \bZ$,
\item [\rm (ii)]  $A= \bigcup_{n \in \bZ} A_n$,
\item [\rm (iii)] $\bigcap_{n\in \bZ} A_n = (0)$ and 
\item [\rm (iv)]  $(A_n\setminus A_{n-1})\cdot (A_m \setminus A_{m-1}) 
\subseteq A_{n+m} \setminus A_{n+m-1}$ for all $n, m \in \bZ$.
\end{enumerate}

We shall call a proper $\bZ$-filtration $\{A_n\}_{n \in \bZ}$ of $A$ 
{\it admissible} if there exists a finite generating set $\Gamma$
of $A$ such that, for any $n \in \bZ$ and $a \in A_n$, $a$ can be written
as a finite sum of monomials in elements of $\Gamma$ and each of these monomials 
is an element of $A_n$. 

Any proper $\bZ$-filtration on $A$ determines the following $\bZ$-graded integral domain 
$$
\gr (A) : = \bigoplus_i A_i/A_{i-1},
$$
and a map
$$
\rho: A \to \gr (A)~~ {\text{defined by}}~~ \rho(a) = a + A_{n-1}, ~~{\rm if}~~ a \in A_n \setminus A_{n-1}.
$$

An exponential map $\phi$ on a graded ring $A$ is said to be {\it homogeneous}
if $\phi: A \to A[U]$ becomes homogeneous 
when $A[U]$ is given a grading induced from $A$ such that $U$ is a homogeneous element. 

\begin{rem}\label{homin}
{\em
Note that if $\phi$ is a homogeneous exponential map on a graded ring $A$, 
then $A^{\phi}$ is a graded subring of $A$. 
}
\end{rem}

\begin{defn}
{\em Let $A$ be an affine domain over a field $k$. A \textit{$\bZ$-grading} of $A$
is a family $\{A_n\}_{n\in \bZ}$ of subgroups of $(A,+)$ such that: 
\begin{enumerate}
\item[\rm(i)] $A= \bigoplus_{n\in \bZ}A_n$.
\item[\rm(ii)] $A_nA_m \subseteq A_{n+m}$ for all $n,m\in \bZ$.
\end{enumerate}}
\end{defn}

Finally, we state below a result on homogenization of exponential maps due to 
H. Derksen, O. Hadas and L. Makar-Limanov (\cite{DHM}) (cf. \cite[Theorem 2.3]{G}).

\begin{thm}\label{MDH}
Let $A$ be an affine domain over a field $k$ with an admissible 
proper $\bZ$-filtration and $\gr(A)$ the induced $\bZ$-graded domain.
Let $\phi$ be a non-trivial exponential map on $A$.
Then $\phi$ induces a non-trivial homogeneous exponential map $\bar{\phi}$ on 
$\gr (A)$ such that $\rho( {A^{\phi}}) \subseteq {\gr (A)}^{\bar{\phi}}$.
\end{thm}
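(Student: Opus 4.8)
The plan is to realise $\bar\phi$ as the associated graded map of $\phi$ with respect to a suitably weighted extension of the given filtration to $A[U]$. Write $\deg$ for the degree function attached to the filtration, so that $\deg(a)=n$ precisely when $a\in A_n\setminus A_{n-1}$ and $\deg(ab)=\deg(a)+\deg(b)$ by condition (iv). Expanding $\phi(a)=\sum_{i\ge 0}\phi_i(a)U^i$ with $\phi_0=\mathrm{id}$, the family $\{\phi_i\}_{i\ge 0}$ is a higher derivation, i.e. $\phi_i(ab)=\sum_{j=0}^i\phi_j(a)\phi_{i-j}(b)$, and axiom (ii) for $\phi$ encodes its iterativity. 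I would give $U$ an integer weight $w$ and filter $A[U]$ by $\deg(bU^i):=\deg(b)+iw$; the goal is to choose $w$ so that $\phi$ becomes a degree-preserving (hence filtered) homomorphism whose induced map on associated graded rings is non-trivial.

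First I would pin down $w$. Since $\phi_0(a)=a$ always contributes the $U^0$ term, one has $\deg(\phi(a))\ge\deg(a)$, and forcing equality amounts to requiring $\deg(\phi_i(a))+iw\le\deg(a)$ for every $a$ and every $i\ge 1$, i.e. $w\le(\deg(a)-\deg(\phi_i(a)))/i$. I would therefore set
$$w:=\inf\Big\{\, \tfrac{\deg(a)-\deg(\phi_i(a))}{i} \;\Big|\; a\in A\setminus\{0\},\ 1\le i\le\deg_U\phi(a) \,\Big\}.$$
The crucial point is that this infimum is a well-defined rational number that is actually attained, and here admissibility is essential. Using the higher-derivation convolution for $\phi_i$ together with $\deg(xy)=\deg(x)+\deg(y)$, a degree bound valid on the finite generating set $\Gamma$ propagates to every monomial in the elements of $\Gamma$, and hence, by admissibility, to every element of $A$. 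Consequently the infimum over all $a$ coincides with the minimum of the finitely many rationals coming from $g\in\Gamma$; in particular $w$ is finite, rational and attained. After rescaling the filtration by a common denominator (which only regrades $\gr(A)$) I may assume $w\in\bZ$.

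With $w$ fixed, $\phi\colon A\to A[U]$ is degree-preserving, so it carries $A_n$ into $(A[U])_n$ and induces a graded homomorphism on associated graded rings. Identifying $\gr(A[U])=\gr(A)[U]$ with $U$ homogeneous of degree $w$, I define $\bar\phi:=\gr(\phi)\colon\gr(A)\to\gr(A)[U]$, which sends $\rho(a)$ (for $a\in A_n\setminus A_{n-1}$) to $\sum_{i:\,\deg(\phi_i(a))+iw=n}\rho(\phi_i(a))\,U^i$, the weighted leading part of $\phi(a)$. Because the $i=0$ term $\rho(a)$ always survives, $\varepsilon_0\bar\phi$ is the identity, giving axiom (i). Axiom (ii), $\bar\phi_V\bar\phi_U=\bar\phi_{V+U}$, follows by applying the functor $\gr$ to the identity $\phi_V\phi_U=\phi_{V+U}$, after extending the weighted filtration to $A[V]$ and $A[V,U]$ (both $U,V$ of weight $w$) and checking the three maps involved are filtered; thus $\bar\phi$ is a homogeneous exponential map. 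Non-triviality is immediate from the attainment of $w$: for the generator realising the minimum there is some $i\ge 1$ with a surviving $U^i$ term, so $\bar\phi\ne\mathrm{id}$. Finally, if $a\in A^\phi$ then $\phi_i(a)=0$ for all $i\ge 1$, whence $\bar\phi(\rho(a))=\rho(a)$, proving $\rho(A^\phi)\subseteq\gr(A)^{\bar\phi}$.

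The hard part is the second step: showing the weight $w$ exists as a finite, attained rational. Everything else is formal functoriality of the associated-graded construction, but the existence and attainment of $w$ — which simultaneously guarantees that $\bar\phi$ is well-defined (degrees do not blow up) and non-trivial (the minimum is achieved on $\Gamma$) — rests essentially on admissibility, through the reduction of an a priori infinite degree computation to the finite set $\Gamma$ via the higher-derivation structure of the $\phi_i$ and the multiplicativity of $\deg$.
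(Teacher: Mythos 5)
This theorem is not proved in the paper at all: it is imported from Derksen--Hadas--Makar-Limanov \cite{DHM} (see also \cite[Theorem 2.3]{G} and \cite{C}), so there is no internal argument to compare yours against. What you have written is, in substance, the standard proof from those sources --- weight $U$ by the minimal value $w$ of $(\deg a-\deg\phi_i(a))/i$, use admissibility to see that this minimum is already computed on the finite generating set $\Gamma$ and is therefore finite, rational and attained, and then take the associated graded of the resulting filtered map --- and I find no gap in it; you have also correctly identified the existence and attainment of $w$ as the one point where admissibility is genuinely used. Two places would still benefit from an explicit sentence. First, the propagation step: for a monomial $m=g_1\cdots g_\ell$ in elements of $\Gamma$ the bound $\deg\phi_i(m)\le\deg m-iw$ follows from the convolution formula $\phi_i(ab)=\sum_j\phi_j(a)\phi_{i-j}(b)$ together with condition (iv) of a proper filtration (multiplicativity of $\deg$) and the fact that each $A_n$ is a $k$-subspace; admissibility is then exactly what lets you pass from monomials to an arbitrary $a\in A_{\deg a}$ without degree loss, and non-emptiness of the set defining $w$ is precisely non-triviality of $\phi$ on $\Gamma$, which holds because $\Gamma$ generates $A$. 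Second, the reduction to $w\in\bZ$: you should check that rescaling the filtration preserves properness and admissibility and does not change the graded pieces of $\gr(A)$ as sets, so that homogeneity of $\bar\phi$ and the inclusion $\rho(A^\phi)\subseteq\gr(A)^{\bar\phi}$ remain statements about the original $\bZ$-grading; alternatively, observe directly that a term $\rho(\phi_i(a))U^i$ can survive only when $iw=\deg a-\deg\phi_i(a)\in\bZ$.
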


\section{Main Theorems}
Throughout the section $k$ 
will denote a field and $B$ will denote the ring
\begin{equation}\label{B}
B= \dfrac{k[X,Y,Z,T]}{(X^{d}Y - P(X,Z), X^{e}T - Q(X,Y,Z))}, 
\end{equation}
where $d,e \in \bN$, $P(X,Z)\in k[X,Z]$ is monic in $Z$ and $Q(X,Y,Z)\in k[X,Y,Z]$ is monic in $Y$.
The letters $x,y,z$ and $t$ will denote the images of $X,Y,Z$ and $T$ respectively in $B.$ 
Set $r := \deg_Z P(X,Z)$ and $s := \deg_Y Q(X,Y,Z).$ Note that when $r=1$ or $s=1$,  
the ring $B$ is a Danielewski surface. We call $B$ a ``double Danielewski surface" if  $r \geq 2$ and $s \ge 2$.
For a ring $R$, the notation $R^*$ will denote the group of units of $R$.

We will compute $\ml(B)$ in Section \ref{mlb} and discuss the isomorphism classes of double Daneilewski surfaces
and characterize the automorphisms of $B$ in Section \ref{ISC}.

\subsection{$\ml$-invariant of $B$}\label{mlb}

For convenience, we state below an elementary result.
\begin{lem}\label{rs}
 Let $R$ be an integral domain and $a, b \in R\setminus \{0\}$. If $a$ is  not a zero-divisor on $R/(b)$,
then $b^n$ is  not a zero-divisor on $R/(a)$ for any integer $n\ge 1$. 
\end{lem}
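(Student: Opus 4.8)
The plan is to induct on $n$, using the hypothesis to peel off one factor of $b$ at a time and the integral-domain property to cancel it. First I would unwind both conditions into statements about ideal membership. The assumption that $a$ is not a zero-divisor on $R/(b)$ says precisely that for every $c \in R$, if $ac \in (b)$ then $c \in (b)$. The desired conclusion that $b^n$ is not a zero-divisor on $R/(a)$ says that for every $r \in R$, if $b^n r \in (a)$ then $r \in (a)$. I would prove this last implication for all integers $n \ge 0$, which in particular covers the required range $n \ge 1$.

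The base case $n = 0$ is immediate, since $b^0 r = r \in (a)$ trivially gives $r \in (a)$. For the inductive step, suppose $n \ge 1$ and that $b^n r = ac$ for some $c \in R$. Since $n \ge 1$, the left-hand side lies in $(b)$, so $ac \in (b)$; applying the hypothesis yields $c \in (b)$, say $c = b c_1$ for some $c_1 \in R$. Substituting gives $b^n r = a b c_1$, hence $b\,(b^{n-1} r - a c_1) = 0$. Because $R$ is an integral domain and $b \ne 0$, we may cancel $b$ to obtain $b^{n-1} r = a c_1 \in (a)$. The inductive hypothesis applied to $n-1$ then yields $r \in (a)$, completing the step.

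The only point requiring genuine care, and the step I would flag as the crux, is the cancellation of the single factor $b$: this is exactly where the integral-domain hypothesis enters, and the statement would fail for a general ring $R$. Everything else is a routine reduction, so I do not expect any serious obstacle beyond setting up the induction so that the reduction lands cleanly on the base case.
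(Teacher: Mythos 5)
Your proof is correct and is essentially the paper's own argument: the paper also peels off one factor of $b$ at a time, using the hypothesis to write $\beta \in bR$ and then cancelling $b$ in the integral domain, and "proceeds in a similar manner" where you make the descent explicit as an induction on $n$. No gaps.
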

\begin{proof}
If $b^n \alpha= a \beta$ for some $\alpha, \beta \in R$, then as $a$ is  not a zero-divisor on $R/(b)$,
we have $\beta \in bR$, and hence since $R$ is an integral domain, we have $b^{n-1}\alpha \in aR$. 
Proceeding in a similar manner, we will get that $\alpha \in aR$. Hence $b^n$ is  not a zero-divisor on $R/(a)$.
\end{proof}

We now recall another elementary lemma ({\cite[Lemma 2.4(2)]{DGO}}).
\begin{lem}\label{lemm}
	Let $R$ be an integral domain and $a, b \in R\setminus \{0\}$. If $b$ is not a zero-divisor on $R/(a)$,
then the ring $\dfrac{R[T]}{(bT-a)}$ is an integral domain.
\end{lem}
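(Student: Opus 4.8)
The plan is to realise $S := R[T]/(bT-a)$ (with $t$ the image of $T$) as a subring of a localisation of $R$, which is automatically a domain. First I would localise at $b$: writing $R_b = R[b^{-1}]$, which is a domain containing $R$ since $R$ is a domain and $b \neq 0$, and using that localisation commutes with quotients, I get $S[b^{-1}] \cong R_b[T]/(bT-a)$. As $b$ is a unit in $R_b$ the ideal $(bT-a)$ coincides with $(T-ab^{-1})$, so $S[b^{-1}] \cong R_b[T]/(T-ab^{-1}) \cong R_b$ via $t \mapsto ab^{-1}$. Hence $S[b^{-1}]$ is an integral domain.

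Granting this, to conclude that $S$ itself is a domain it suffices to prove that $b$ is a non-zero-divisor on $S$: the localisation map $S \to S[b^{-1}]$ is then injective, so $S$ embeds into the domain $R_b$. This is the step where the hypothesis on $b$ must enter, and I expect it to be the main obstacle, since the localisation computation and the final assembly are purely formal.

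For this step I would suppose $bg = 0$ in $S$ for some $g \in S$ and lift $g$ to $\tilde g \in R[T]$, so that $b\tilde g = (bT-a)h$ for some $h \in R[T]$. The idea is to reduce this identity modulo $a$, writing bars for images in $(R/aR)[T]$: since $(bT-a)h = bTh - ah$ and $a \equiv 0$, this becomes $\bar b\,\bar{\tilde g} = \bar b\, T\bar h$. Because $b$ is not a zero-divisor on $R/(a)$, the constant $\bar b$ is a non-zero-divisor in the polynomial ring $(R/aR)[T]$ and may be cancelled, giving $\bar{\tilde g} = T\bar h$; equivalently $\tilde g = Th + aw$ for some $w \in R[T]$. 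Substituting back into $b\tilde g = bTh - ah$ and cancelling the common $bTh$ yields $abw = -ah$, and since $R[T]$ is a domain with $a \neq 0$ I may cancel $a$ to get $h = -bw$. Feeding this into $\tilde g = Th + aw$ gives $\tilde g = -(bT-a)w \in (bT-a)R[T]$, so $g = 0$ in $S$. This shows $b$ is a non-zero-divisor on $S$, which together with the first paragraph proves that $S$ is an integral domain.

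The delicate point is exactly this cancellation: from the single relation $b\tilde g = (bT-a)h$ one must extract that $\tilde g$ lies in $(bT-a)R[T]$, and the reduction-mod-$a$ trick is precisely what converts the hypothesis ``$b$ is not a zero-divisor on $R/(a)$'' into the required cancellation of the constant $\bar b$ in $(R/aR)[T]$. I would expect this to be the only place where the domain hypotheses on $R$ and the regularity condition on $b$ are both genuinely needed.
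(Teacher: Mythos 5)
Your proof is correct. Note that the paper does not actually prove this lemma --- it is quoted verbatim from \cite[Lemma 2.4(2)]{DGO} --- so there is no in-paper argument to compare against; your two-step argument (first identifying $S[b^{-1}]\cong R_b$, then using reduction modulo $a$ to show that $b$ is a non-zero-divisor on $S=R[T]/(bT-a)$, so that $S$ embeds in the domain $R_b$) is a complete, self-contained proof, and each step checks out: the cancellation of $\bar b$ in $(R/aR)[T]$ is legitimate because a constant that is a non-zero-divisor on the coefficient ring is a non-zero-divisor on the polynomial ring, and the final cancellation of $a$ uses only that $R[T]$ is a domain and $a\neq 0$. This is essentially the standard argument for such lemmas (equivalently, one can phrase it as computing the kernel of $R[T]\to R_b$, $T\mapsto a/b$), so nothing further is needed.
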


\begin{lem}{\label{ID}}
$B$ is an integral domain.
\end{lem}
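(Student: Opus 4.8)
The plan is to realise $B$ as the top of a two-step tower, adjoining first $y$ and then $t$, and to apply Lemma \ref{lemm} at each step to propagate the integral-domain property. Writing $R_1 = k[X,Z]$ and
$$
R_2 = \frac{R_1[Y]}{(X^{d}Y - P(X,Z))} = \frac{k[X,Y,Z]}{(X^{d}Y - P(X,Z))},
$$
we have $B \cong \frac{R_2[T]}{(x^{e}T - q)}$, where $q$ denotes the image of $Q(X,Y,Z)$ in $R_2$. Thus it suffices to verify, at each stage, that the relevant power of $x$ is a non-zero-divisor modulo the corresponding polynomial; for this I would use Lemma \ref{rs} to transfer the question to a quotient modulo $x$ (rather than modulo $P$ or $q$), where the computation is transparent.

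For the first step, $R_1 = k[X,Z]$ is a domain and $R_1/(X) = k[Z]$. The image of $P(X,Z)$ there is $P(0,Z)$, which is monic of degree $r \geq 1$ in $Z$ and hence a nonzero element of the domain $k[Z]$; so $P$ is a non-zero-divisor on $R_1/(X)$. By Lemma \ref{rs} (with $a = P$, $b = X$), $X^{d}$ is a non-zero-divisor on $R_1/(P)$, and then Lemma \ref{lemm} shows that $R_2$ is an integral domain.

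For the second step I would compute $R_2/(x)$. Since $d \geq 1$, reducing modulo $x$ kills the term $x^{d}y$, so
$$
R_2/(x) \cong \frac{k[Y,Z]}{(P(0,Z))} \cong \bigl(k[Z]/(P(0,Z))\bigr)[Y],
$$
a polynomial ring in $Y$ over the (possibly non-reduced) ring $k[Z]/(P(0,Z))$. Under this identification the image of $q$ is $Q(0,Y,Z)$, which is monic in $Y$ of degree $s$. As a monic polynomial is a non-zero-divisor in $S[Y]$ for any commutative ring $S$ (by comparing top-degree coefficients), $q$ is a non-zero-divisor on $R_2/(x)$. Applying Lemma \ref{rs} once more (now with $R = R_2$, $a = q$, $b = x$) gives that $x^{e}$ is a non-zero-divisor on $R_2/(q)$, and a final application of Lemma \ref{lemm} yields that $B \cong R_2[T]/(x^{e}T - q)$ is an integral domain.

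I expect the only delicate point to be the second step: one must reduce modulo $x$ precisely so as to land in $\bigl(k[Z]/(P(0,Z))\bigr)[Y]$, where monicity of $Q$ in $Y$ forces the non-zero-divisor property even though $k[Z]/(P(0,Z))$ may fail to be a domain. Lemma \ref{rs} is exactly the device that converts this clean statement ``modulo $x$'' into the statement ``$x^{e}$ is a non-zero-divisor modulo $q$'' demanded by Lemma \ref{lemm}.
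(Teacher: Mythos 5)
Your proof is correct and follows essentially the same route as the paper: the second step (adjoining $t$) is exactly the paper's argument, reducing modulo $x$ to $\bigl(k[Z]/(P(0,Z))\bigr)[Y]$ and using monicity of $Q$ in $Y$ together with Lemmas \ref{rs} and \ref{lemm}. The only difference is in the first step, where the paper shows $k[X,Y,Z]/(X^{d}Y-P(X,Z))$ is a domain by noting that $X^{d}Y-P(X,Z)$ is irreducible in the UFD $k[X,Y,Z]$ (linear in $Y$, with $X\nmid P$), whereas you re-run the Lemma \ref{rs}/Lemma \ref{lemm} machinery starting from $k[X,Z]$; both are valid, and yours has the minor advantage of treating the two adjunctions uniformly.
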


\begin{proof}
Let $R= \dfrac{k[X,Y,Z]}{(X^{d}Y-P(X,Z))}.$ Since $(X^{d}Y-P(X,Z))$ is linear in
 $Y$ and $X$ does not divide $P(X,Z)$ in $k[X,Z]$, $(X^{d}Y-P(X,Z))$ is irreducible
  in the UFD $k[X,Y,Z]$. Hence, $R$ is an integral domain. We can identify $R$ as a
   subring of $B$, by identifying the images of $X,Y,Z$ in $R$ with $x, y, z$ in $B$. Then $B=R[T]/(x^{e}T-Q(x,y,z))$. 

Now $R/(x) \cong \left(\dfrac{k[Z]}{P(0,Z)}\right)[Y]$. Therefore, since
$Q(X,Y,Z)$ is monic in $Y$, it follows that $Q(x,y,z)$  is  not a zero-divisor on $R/(x)$.
Hence by Lemma \ref{rs}, $x^e$ is  not
a zero-divisor on $R/(Q(x,y,z))$. 
 Therefore, by Lemma \ref{lemm}, $B$ is an integral domain.
\end{proof}

In the next two results we show that there exists an admissible proper $\bZ$-filtration on $B$ such that $gr(B)$ is isomorphic to
a special case of the ring $B$ which we denote by $D$ and an 
admissible proper $\bZ$-filtration on $D$ such that $gr(D)$ is isomorphic to
a further special case of the ring $D$ which we denote by $C$. 
Note that in these results $P(X,Z)$ is as in $B$. 
Also recall that $s$ denotes $\deg_YQ(X,Y,Z)$ and $r$ denotes $\deg_ZP(X,Z)$.

\begin{lem}\label{filt1}
Considering $B$ as a subring of the $\bZ$-graded ring $k[x,{x}^{-1},z] = \bigoplus_{i \in \bZ}k[z]x^{i}$,
define a proper $\bZ$-filtration $\{B_n\}_{n \in \bZ}$ on $B$ by 
$$
B_{n} := B \cap (\bigoplus_{i\geq -n}k[z]x^{i}).
$$
This filtration on $B$ is admissible with the generating set $\{x,y,z,t\}$
and the corresponding graded ring $\gr(B)  = \bigoplus_{n \in \bZ} (B_{n}/B_{n+1})$
is isomorphic to $$D:= \dfrac{k[X,Y,Z,T]}{(X^{d}Y - P(0,Z), X^{e}T - Y^s)}.$$
\end{lem}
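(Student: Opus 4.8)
The plan is to carry out every computation inside the $\bZ$-graded domain $L:=k[x,x^{-1},z]=\bigoplus_{i\in\bZ}k[z]x^{i}$, in which $B$ is the $k$-subalgebra generated by $x,z,\ y=P(x,z)x^{-d}$ and $t=Q(x,y,z)x^{-e}$. Writing $v(a)$ for the least power of $x$ occurring in $a\in B\subseteq L$, we have $B_n=\{a\in B : v(a)\ge -n\}$, and I set $\w(a):=-v(a)$. Conditions (i)--(iii) for a proper $\bZ$-filtration are immediate, since each element of $B$ has finitely many, bounded-below, powers of $x$. For (iv) I would observe that if $a\in B_n\setminus B_{n-1}$ and $b\in B_m\setminus B_{m-1}$ then $v(a)=-n$, $v(b)=-m$, and, as $L$ is a domain whose coefficients lie in the domain $k[z]$, the coefficient of $x^{-(n+m)}$ in $ab$ is the nonzero product of the two leading coefficients; hence $v(ab)=-(n+m)$. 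This makes $\rho$ multiplicative and $\gr(B)$ a domain.

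Next I would record the leading forms of the generators. Because $P$ is monic in $Z$ and $Q$ is monic in $Y$ (so that the term $y^{s}$ carries the most negative power of $x$ in $Q(x,y,z)$), one gets $\rho(x)=x$, $\rho(z)=z$, $\rho(y)=P(0,z)x^{-d}$ and $\rho(t)=P(0,z)^{s}x^{-(ds+e)}$. Passing the two defining relations of $B$ to leading forms yields $\rho(x)^{d}\rho(y)=P(0,\rho(z))$ and $\rho(x)^{e}\rho(t)=\rho(y)^{s}$, which are exactly the defining relations of $D$. Hence $X\mapsto\rho(x),\,Y\mapsto\rho(y),\,Z\mapsto\rho(z),\,T\mapsto\rho(t)$ defines a $k$-algebra homomorphism $\psi\colon D\to\gr(B)$, whose image $B'$ is the subalgebra generated by the leading forms of the four generators.

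To see that $\psi$ is injective, note first that $D$ has exactly the shape of the ring in Lemma \ref{ID} (with $P(0,Z)$ in place of $P(X,Z)$ and $Y^{s}$ in place of $Q(X,Y,Z)$), so $D$ is an integral domain, clearly with $\td_k D=2$. On the other side, the graded inclusion $\gr(B)\hookrightarrow\gr(L)=L$ --- valid because $B_n\cap L_{n-1}=B_{n-1}$ --- shows that $\gr(B)$, and hence $B'$, is a domain of transcendence degree $2$, since it contains the algebraically independent elements $\rho(x)=x$ and $\rho(z)=z$. A surjection $D\twoheadrightarrow B'$ of affine domains of equal transcendence degree must be injective, as a nonzero (prime) kernel would strictly drop the dimension. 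Thus $\psi$ gives an isomorphism $D\xrightarrow{\ \sim\ }B'$.

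It remains to prove $B'=\gr(B)$; this is precisely the admissibility of the filtration for $\Gamma=\{x,y,z,t\}$, and I expect it to be the main obstacle. I would use the two relations as weight-non-increasing rewriting rules $x^{d}y\rightsquigarrow P(x,z)$ and $x^{e}t\rightsquigarrow Q(x,y,z)$: since the leading forms on the two sides agree, each such substitution leaves the top weight unchanged and creates only terms of strictly smaller weight, while lowering a $y$- or $t$-exponent, so the procedure terminates and rewrites any element of $B$ as a $k$-combination of finitely many ``standard'' monomials in $x,y,z,t$. The delicate point is to show that, for $a\in B_n$, no standard monomial of weight $>n$ can occur, equivalently that standard monomials of a common weight have $k$-linearly independent leading forms (so their top-weight contributions cannot cancel). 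I would obtain this non-cancellation from the injectivity of $\psi$ together with a standard-monomial (Gr\"obner-type) analysis of $D$: the standard monomials map, preserving weight and bijectively, onto a $k$-basis of the domain $D\cong B'$, so their leading forms are independent in $\gr(B)$. Granting this, every $a\in B_n$ is a sum of monomials each lying in $B_n$, the filtration is admissible, $\gr(B)$ is generated by $\rho(\Gamma)$, and therefore $\psi\colon D\xrightarrow{\ \sim\ }\gr(B)$, as asserted.
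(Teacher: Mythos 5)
Your proposal is correct and follows essentially the same route as the paper: realize the filtration via the $x$-grading of $k[x,x^{-1},z]$, reduce elements of $B$ to a normal form using the two relations, check that the relations of $D$ hold among the leading forms, and conclude that the resulting surjection $D\to\gr(B)$ is an isomorphism because $D$ is an integral domain and $\gr(B)$ contains the algebraically independent elements $\rho(x),\rho(z)$. The only difference is one of care rather than of method: you explicitly isolate the non-cancellation of top-weight terms needed for admissibility (which the paper asserts directly from its normal form), and your proposed derivation of it from the injectivity of $\psi$ together with linear independence of standard monomials in $D$ is sound and non-circular.
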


\begin{proof}
We have  $x \in B_{-1}\setminus B_{-2}$, 
$z \in B_{0}\setminus B_{-1}$, $y \in B_{d}\setminus B_{d-1}$ and 
$t \in B_{(ds+e)} \setminus B_{(ds+ e-1)}.$ 
Using the relations $x^{d}y=P(x,z)$ and $x^{e}t=Q(x,y,z),$ we see that each element $g \in B$ can be written as 
\begin{equation}\label{e1}
g= f_0(x,z) + \sum_{\substack{0 \leq i <d\\ j>0}} a_{ij}(z)x^{i}y^{j} + \sum_{\substack{0\leq i <e\\l>0}} b_{il}(z)x^{i}t^{l} + 
\sum_{\substack{0\leq i< min\{d,e\}\\j,l>0}} c_{ijl}(z)x^{i}y^{j}t^{l} ,
\end{equation}
where $f_{0}(x,z) \in k[x,z]$, $a_{ij}(z) , b_{il}(z), c_{ijl}(z) \in k[z].$
Let $\widetilde{B}$ denote the graded ring $\gr(B) = \bigoplus_{n \in \bZ} (B_{n}/B_{n+1})$ 
with respect to the above filtration.
For $g \in B,$ let $\tilde{g}$ denote the image of $g$ in $\widetilde{B}.$ 
It follows from  (\ref{e1}), 
that the filtration defined on $B$ is admissible 
with the generating set $\Gamma = \{x,y,z,t\}.$ Hence, 
$\widetilde{B}$ is generated by $\tilde{x}, \tilde{y}, \tilde{z}$ and $\tilde{t}.$ 

We now show that, $\widetilde{B} \cong D$.
Note that, $x^{d}y, P(0,z) \in B_{0}.$ Hence, since  $x^{d}y- P(0,z) \in B_{-1}$, 
we have $\tilde{x}^{d}\tilde{y} - P(0,\tilde{z})=0$ in $\widetilde{B}$. 
Again note that  $x^{e}t, y^{s} \in B_{ds}$ and $x^{e}t- y^{s} = Q(x, y, z) - y^s \in   B_{ds-1}$. 
Hence,  $\tilde{x}^{e}\tilde{t}- \tilde{y}^{s}=0$ in $\widetilde{B}.$ 
As $\widetilde{B}$ can be identified with a subring of $\gr(k[x,{x}^{-1},z]) \cong k[x,{x}^{-1},z],$ 
we see that the elements $\tilde{x}$ and $\tilde{z}$ of $\widetilde{B}$ are algebraically independent over $k.$
Since $D$
is an integral domain (cf. Lemma \ref{ID}),
we have, $\widetilde{B} \cong D$.
\end{proof}

\begin{lem}\label{filt2}
Let $D$ be as in Lemma \ref{filt1} and let $\tilde{x}, \tilde{y}, \tilde{z}, \tilde{t}$
respectively denote the images of $X, Y,Z, T$ in $D$. 
Considering $D$ as a subring of the $\bN$-graded ring 
$k[\tilde{x},\tilde{x}^{-1},\tilde{z}] = \bigoplus_{i \in \bN}k[\tilde{x}, {\tilde{x}}^{-1}]\tilde{z}^i,$
define a  proper $\bZ$-filtration $\{D_n\}_{n \in \bZ}$ on
 $D$ by 
$$
D_{n} := D\cap (\bigoplus_{i\le n}k[\tilde{x}, {\tilde{x}}^{-1}]\tilde{z}^i).
$$
This filtration on $D$ is admissible with the generating set $\{\tilde{x}, \tilde{y}, \tilde{z}, \tilde{t}\}$
and the corresponding graded ring $\gr(D)  = \bigoplus_{n \in \bZ} (D_{n}/D_{n+1})$
is isomorphic to $$C:= \dfrac{k[X,Y,Z,T]}{(X^{d}Y - Z^r, X^{e}T - Y^s)}.$$
\end{lem}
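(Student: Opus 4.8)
The plan is to run the argument of Lemma~\ref{filt1} again, now with the filtration variable $\tilde z$ in place of $x$. First I would make the embedding $D\hookrightarrow L:=k[\tilde x,\tilde x^{-1},\tilde z]$ explicit: the two relations force $\tilde y=P(0,\tilde z)\tilde x^{-d}$ and $\tilde t=\tilde y^s\tilde x^{-e}=P(0,\tilde z)^s\tilde x^{-ds-e}$, so $D$ is the $k$-subalgebra of $L$ generated by $\tilde x,\tilde z,\tilde y,\tilde t$, and every element of $D$ is a $k$-linear combination of the monomials
$$
\tilde x^a\tilde z^b\tilde y^c\tilde t^f=\tilde x^{\,a-dc-(ds+e)f}\,\tilde z^b\,P(0,\tilde z)^{\,c+sf},\qquad a,b,c,f\ge 0 .
$$
Grading $L$ by $\tilde z$-degree (so $\deg\tilde x=0$, $\deg\tilde z=1$) one reads off $\tilde x\in D_0\setminus D_{-1}$, $\tilde z\in D_1\setminus D_0$, $\tilde y\in D_r\setminus D_{r-1}$ and $\tilde t\in D_{rs}\setminus D_{rs-1}$, since $P(0,\tilde z)$ is monic of $\tilde z$-degree $r$. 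As $L$ is a domain the top $\tilde z$-degree is additive, so $\{D_n\}$ is a proper $\bZ$-filtration with $D_n=0$ for $n<0$, and the inclusion $D\subseteq L$ with induced filtration gives an injection of graded rings $\gr(D)\hookrightarrow\gr(L)\cong L$.

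Next I would read off the homogeneous relations in $\gr(D)$. Writing $\bar x,\bar y,\bar z,\bar t$ for $\rho(\tilde x),\dots,\rho(\tilde t)$ and using additivity of degrees: $\tilde x^d\tilde y=P(0,\tilde z)=\tilde z^r+(\text{lower }\tilde z\text{-degree})$ yields $\bar x^d\bar y=\bar z^r$, while $\tilde x^e\tilde t=\tilde y^s$ is already $\tilde z$-homogeneous and yields $\bar x^e\bar t=\bar y^s$. Under $\gr(D)\hookrightarrow L$ these map to $\bar x=\tilde x$, $\bar z=\tilde z$, $\bar y=\tilde z^r\tilde x^{-d}$, $\bar t=\tilde z^{rs}\tilde x^{-ds-e}$, which generate a graded subring $C'\subseteq\gr(D)$. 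The assignment $X\mapsto\tilde x,\ Z\mapsto\tilde z,\ Y\mapsto\tilde z^r\tilde x^{-d},\ T\mapsto\tilde z^{rs}\tilde x^{-ds-e}$ then defines a $k$-algebra map $C\to C'$ which is onto by construction and injective, because $C$ is an integral domain (it is the ring $B$ for $P=Z^r$, $Q=Y^s$, so Lemma~\ref{ID} applies) with $\td_k C=2$ while $\tilde x,\tilde z\in C'$ are algebraically independent. Thus $C\cong C'\subseteq\gr(D)\subseteq L$, and the lemma reduces to the reverse inclusion $\gr(D)\subseteq C'$.

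This reverse inclusion is the heart of the matter and, I expect, the main obstacle: $P(0,\tilde z)$ may carry lower-order terms in $\tilde z$, and a priori these could create leading ($\tilde z$-top) coefficients involving very negative powers of $\tilde x$ that do not occur in $C'$. The key is a divisibility estimate. Writing $g\in D$ as a Laurent polynomial $g=\sum_N g_N(\tilde z)\,\tilde x^N$, the displayed spanning set shows that $g_N$ is a $k$-combination of terms $\tilde z^b P(0,\tilde z)^{c+sf}$ with $dc+(ds+e)f\ge -N$; hence $g_N$ is divisible by $P(0,\tilde z)^{u(N)}$, where
$$
u(N):=\min\{\, c+sf \ :\ c,f\ge 0,\ dc+(ds+e)f\ge -N \,\}.
$$
Therefore $g_N\neq 0$ forces $\deg_{\tilde z}g_N\ge r\,u(N)$. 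Consequently a monomial $\tilde x^N$ can occur in the degree-$n$ leading coefficient of some $g\in D$ (i.e.\ $\deg_{\tilde z}g_N=n=\deg_{\tilde z}g$) only if $r\,u(N)\le n$; but this is precisely the condition under which $\tilde x^N\tilde z^n$ already lies in the degree-$n$ part of $C'$. This gives $\gr(D)\subseteq C'$, hence $\gr(D)=C'\cong C$.

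Finally, admissibility with $\Gamma=\{\tilde x,\tilde y,\tilde z,\tilde t\}$ is a formal consequence: once $\gr(D)=C'$ is generated as a $k$-algebra by the leading terms $\bar x,\bar y,\bar z,\bar t$, a downward induction on $\tilde z$-degree (terminating because $D_n=0$ for $n<0$) rewrites any $a\in D_n$ as a sum of monomials in $\Gamma$, each of $\tilde z$-degree $\le n$. In the write-up I would treat the degree bookkeeping and the elementary minimization defining $u(N)$ as routine, and devote the exposition to the divisibility step, which is exactly where the lower-order terms of $P(0,Z)$ must be controlled.
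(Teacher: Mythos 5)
Your proof is correct, and its endgame coincides with the paper's: verify the two homogeneous relations $\bar x^d\bar y=\bar z^r$ and $\bar x^e\bar t=\bar y^s$ in $\gr(D)\subseteq k[\tilde x,\tilde x^{-1},\tilde z]$, note that $\bar x,\bar z$ are algebraically independent, and use that $C$ is a two-dimensional integral domain (Lemma \ref{ID}) to conclude that the surjection $C\to\gr(D)$ is an isomorphism. Where you genuinely diverge is in the key step of showing that $\gr(D)$ is generated by the leading forms $\bar x,\bar y,\bar z,\bar t$, and in how admissibility is obtained. The paper first reduces every element of $D$ to the normal form (\ref{e2}), $\sum_{i=0}^{r-1}\bigl(\sum_{0\le j<s}g_{ij}(\tilde x)\tilde y^j+\sum_{0\le j<s,\,\ell>0}h_{ij\ell}(\tilde x)\tilde y^j\tilde t^{\ell}\bigr)\tilde z^i$, using $\tilde z^r=\tilde x^d\tilde y$ and $\tilde y^s=\tilde x^e\tilde t$; since the distinct monomials occurring there have distinct leading terms in $k[\tilde x,\tilde x^{-1},\tilde z]$, this one decomposition delivers admissibility and the generation of $\gr(D)$ simultaneously. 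You instead expand $g=\sum_N g_N(\tilde z)\tilde x^N$ and prove the divisibility bound $P(0,\tilde z)^{u(N)}\mid g_N$, which forces the leading form into $C'$, and then recover admissibility by a downward induction using $D_n=0$ for $n<0$; your identification of the degree-$n$ part of $C'$ with the span of the $\tilde x^N\tilde z^n$ satisfying $r\,u(N)\le n$ checks out. Both routes are valid: the paper's normal form is more economical and is reused later (in Theorem \ref{ml}, to pull invariants back from $C$ to $D$ to $B$), while your divisibility estimate isolates explicitly why the lower-order terms of $P(0,Z)$ cannot contaminate the top $\tilde z$-degree, a point the paper leaves implicit.
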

\begin{proof} 
Note that, $D_{n} =\phi$ for all $n<0$,  
$\tilde{z}\in D_1\setminus D_0$,  $\tilde{x} \in D_0 \setminus D_{-1}$, 
$\tilde{y} \in D_{r} \setminus D_{r-1}$ and $\tilde{t} \in D_{rs}\setminus D_{rs-1}.$
 Using the relations $\tilde{x}^{d}\tilde{y} = P(0,\tilde{z})$ and $\tilde{x}^{e}\tilde{t}= \tilde{y}^{s},$ 
 we see that each element $\tilde{g} \in D$ can be written  as 
\begin{equation}\label{e2}
\tilde{g} = \sum_{i=0}^{r-1}\left(\sum_{0\le j<s}g_{ij}(\tilde{x}){\tilde{y}}^j+
\sum_{\substack{0\le j< s \\\ell> 0}}h_{ij\ell}(\tilde{x}){\tilde{y}^j}\tilde{t}^{\ell}\right)\tilde{z}^i,
\end{equation}
where $g_{ij}(\tilde{x}), h_{ij\ell}(\tilde{x}) \in k[\tilde{x}]$.
Let $\bar{D}$ denote the graded ring $\gr(D)= \bigoplus_{n \in \bZ}(D_n/D_{n-1})$ 
with respect to the above filtration. For $\tilde{g} \in D,$ let $\bar{g}$ denote the image of $\tilde{g}$ in $\bar{D}.$
It follows from (\ref{e2}) that the filtration defined on $D$ is admissible 
with the generating set $\Gamma^{'} = \{\tilde{x},\tilde{y},\tilde{z},\tilde{t}\}.$ 
Hence, $\bar{D}$ is generated by $\bar{x}$, $\bar{y}$, $\bar{z}$ and $\bar{t}.$ 

We now show that $\bar{D} \cong C$. 
Note that $\tilde{x}^{d}\tilde{y}$, $\tilde{z}^{r} \in D_{r}$ and
 $\tilde{x}^{d}\tilde{y}-\tilde{z}^{r} = P(0,\tilde{z})-\tilde{z}^r \in D_{r-1}$.
Hence, $\bar{x}^{d}\bar{y} - \bar{z}^{r}=0$ in $\bar{D}$.
 Again, $\tilde{x}^{e}\tilde{t}$, $\tilde{y}^{s} \in D_{rs}$ and $\tilde{x}^{e}\tilde{t}-\tilde{y}^{s}=0$ in 
$D$.
 Hence, $\bar{x}^{e}\bar{t} - \bar{y}^{s}=0$ in $\bar{D}.$ As $\bar{D}$ can be identified with a subring of 
 $\gr(k[\tilde{x}, {\tilde{x}}^{-1},\tilde{z}]) \cong k[\tilde{x}, {\tilde{x}}^{-1},\tilde{z}],$ 
 we see that the elements $\bar{x}$ and $\bar{z}$ of $\bar{D}$ are algebraically independent over $k.$ 
 Since $C$ is an integral domain (cf. Lemma \ref{ID}),
  we have, $\gr(D)= \bar{D} \cong C.$
\end{proof}

\begin{lem}\label{mll}
Let $C$ be the integral domain defined by 
$$
\dfrac{k[X,Y,Z,T]}{(X^{d}Y-Z^{r},X^{e}T-Y^{s})}, {\text{~~where~~}} d, e \ge 1;
$$
and any one of the following holds:
\begin{eqnarray}\label{mlc}
\text{~either~} r\ge 2 \text{~and~}  \nonumber s\ge 2 \\
\text{~or~} r \ge 2\text{~and~} s=1  \\
\text{~or~}r=1, s \ge 2 \text{~and~} e\ge 2 \nonumber .
\end{eqnarray}
Let $\bar{x}$, $\bar{y}$, $\bar{z}$ and $\bar{t}$ respectively denote the images of 
$X$, $Y$, $Z$ and $T$ in $C$. 
Consider $C = \bigoplus_{i \in \bZ} C_i$ as a graded subring of $k[\bar{x},\bar{x}^{-1}, \bar{z}]$  with 
$$C_i= C \cap k[\bar{x},\bar{x}^{-1}]\bar{z}^i {\text{~~for each~~}} i \geq 0 {\text{~~and~~}} C_i=0 {\text{~~for~~}} i< 0.$$ Then $C^{\phi}\subseteq k[\bar{x}]$
for any non-trivial homogeneous exponential map
$\phi$ on the graded ring $C$.
\end{lem}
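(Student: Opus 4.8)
The plan is to reduce the whole statement to the single assertion $\bar{x}\in C^{\phi}$ and then to localise. Throughout I use that, for the nontrivial $\phi$, the ring $C^{\phi}$ is factorially closed in $C$ (Lemma \ref{exp3}(i)), algebraically closed in $C$ (ii), of transcendence degree $\td_k C^{\phi}=1$ (iv), and --- $\phi$ being homogeneous --- a graded subring of $C$ (Remark \ref{homin}). The grading is explicit: setting $M(n)=\max\{da+(ds+e)b : ra+rsb\le n,\ a,b\ge 0\}$, the monomial $w_n:=\bar{x}^{-M(n)}\bar{z}^{\,n}$ lies in $C$ and is the ``most negative'' one of $\bar{z}$-degree $n$, so that $C_n=k[\bar{x}]\,w_n$ is free of rank one over $k[\bar{x}]$; in particular $C_0=k[\bar{x}]$.

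Suppose first that $\bar{x}\in C^{\phi}$. Taking $S=\{\bar{x}^{m}\}_{m\ge0}\subseteq C^{\phi}\setminus\{0\}$ and noting $C[\bar{x}^{-1}]=k[\bar{x},\bar{x}^{-1}][\bar{z}]=(k[\bar{x},\bar{x}^{-1}])^{[1]}$, Lemma \ref{exp3}(vi) produces a nontrivial exponential map on this polynomial ring with ring of invariants $C^{\phi}[\bar{x}^{-1}]$. Any such invariant is algebraic over $k(\bar{x})$ and lies in $k[\bar{x},\bar{x}^{-1}][\bar{z}]$, hence already lies in $k[\bar{x},\bar{x}^{-1}]$; therefore $C^{\phi}[\bar{x}^{-1}]=k[\bar{x},\bar{x}^{-1}]$, and intersecting with $C$ gives $C^{\phi}\subseteq k[\bar{x},\bar{x}^{-1}]\cap C=k[\bar{x}]$, as wanted.

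It remains to prove $\bar{x}\in C^{\phi}$; assume not. From $\bar{z}^{\,r}=\bar{x}^{d}\bar{y}$ and factorial closedness, $\bar{z}\in C^{\phi}$ would force $\bar{x}\in C^{\phi}$; from $\bar{y}^{\,s}=\bar{x}^{e}\bar{t}$ likewise $\bar{y}\in C^{\phi}$ would force $\bar{x}\in C^{\phi}$. Hence $\bar{y},\bar{z}\notin C^{\phi}$. Also $(C^{\phi})_0=k$, since a nonconstant element of $C^{\phi}\cap k[\bar{x}]$ would make $\bar{x}$ algebraic over $C^{\phi}$, forcing $\bar{x}\in C^{\phi}$ by (ii). As $\td_k C^{\phi}=1$, $C^{\phi}$ contains a nonzero homogeneous element $c$ of positive degree $n$. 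Writing $c=h(\bar{x})w_n$ and factoring $h$ in $k[\bar{x}]$, each irreducible factor would lie in $C^{\phi}\cap C_0=k$, so $h\in k^{*}$ and $c=\lambda w_n$. But $w_n=\bar{y}^{\,a}\bar{t}^{\,b}\bar{z}^{\,\ell}$ for the maximising $(a,b)$ and $\ell=n-ra-rsb\ge0$; as $\bar{y},\bar{z}\notin C^{\phi}$, factorial closedness forces $a=\ell=0$, whence $b\ge1$ and $\bar{t}\in C^{\phi}$.

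The crux is to contradict $\bar{t}\in C^{\phi}$ together with $\bar{x}\notin C^{\phi}$, and this is where the three hypotheses enter. By the previous paragraph every positive-degree homogeneous element of $C^{\phi}$ is a scalar times a power of $\bar{t}$; hence the leading coefficient furnished by Lemma \ref{exp3}(iii) (for a homogeneous choice of the relevant element) is a unit times a power of $\bar{t}$, and that lemma yields $C[\bar{t}^{-1}]=R[q]$, a polynomial ring in one variable over $R:=C^{\phi}[\bar{t}^{-1}]$. In particular $C[\bar{t}^{-1}]$ is smooth over the one-dimensional domain $R$. Now $\Sp C$ is the complete intersection $\{\bar{x}^{d}\bar{y}-\bar{z}^{r}=0,\ \bar{x}^{e}\bar{t}-\bar{y}^{s}=0\}\subseteq\A^{4}_k$, and a direct Jacobian computation shows that, under each of the three hypotheses, every point of the punctured line $L=\{\bar{x}=\bar{y}=\bar{z}=0,\ \bar{t}\neq0\}$ is a singular point of $C$: the rank of the Jacobian drops below $2$ along $L$ exactly because the relevant exponents among $r,s,e$ are at least $2$ --- which is what fails in the excluded configurations $r=1,\,e=1$ and $r=s=1$. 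Thus $C[\bar{t}^{-1}]$ is singular at every point of $L$. Since a polynomial ring over a regular local ring is regular, each such point must lie over a singular point of $R$; but $\bar{t}\in R$ is nonconstant along $L$, so these images form an infinite subset of $\Sp R$, contradicting that the one-dimensional domain $R$ has only finitely many singular points. Therefore $\bar{x}\in C^{\phi}$, completing the proof. I expect the verification that $C[\bar{t}^{-1}]=R[q]$ and the Jacobian analysis along $L$ to be the main technical points, the hypotheses being precisely the conditions that keep $C$ singular along $L$.
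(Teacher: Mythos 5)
Your argument is correct, but it follows a genuinely different route from the paper's. The paper first proves $C^{\phi}\subseteq k[\bar{x},\bar{y},\bar{t}]$ by writing each $f\in C$ uniquely as $\sum_{i=0}^{r-1}f_i\bar{z}^i$ with $f_i\in k[\bar{x},\bar{y},\bar{t}]$ and using that $C^{\phi}$ is graded and factorially closed; it then lists the homogeneous elements of $k[\bar{x},\bar{y},\bar{t}]$ and eliminates $\bar{y}$ and $\bar{t}$ separately. You instead compute each graded piece $C_n$ as a free rank-one $k[\bar{x}]$-module on the explicit monomial $w_n$, reduce the whole lemma to the single claim $\bar{x}\in C^{\phi}$ (your localization at $\bar{x}$ then finishes cleanly), and show that the failure of that claim forces $\bar{t}\in C^{\phi}$. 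The decisive divergence is how $\bar{t}\in C^{\phi}$ is refuted: the paper passes to the generic fibre $k(T)[X,Y,Z]/(X^{d}Y-Z^{r},X^{e}T-Y^{s})$, a one-dimensional non-normal domain which therefore carries no non-trivial exponential map, whereas you use Lemma \ref{exp3}(iii) to exhibit $C[\bar{t}^{-1}]$ as a cylinder over a one-dimensional affine base and contradict this with the singularity of $C$ along the infinitely many points of $\{\bar{x}=\bar{y}=\bar{z}=0,\ \bar{t}\neq 0\}$. Both arguments consume the hypothesis (\ref{mlc}) at exactly this step, and your Jacobian computation makes its role transparent (it is precisely the condition that $C$ stay singular along the $\bar{t}$-axis), at the cost of some extra care that you rightly flag: you must justify that the minimal-$U$-degree element in Lemma \ref{exp3}(iii) can be taken homogeneous (true, since homogeneity of $\phi$ prevents cancellation between graded components, so $\deg_U\phi$ is computed componentwise), that $C^{\phi}[\bar{t}^{-1}]$ is an affine domain with finite non-regular locus (it is a retract of the affine ring $C[c^{-1}]$), and that the Jacobian criterion certifies non-regularity at the relevant closed points (fine: either $k$ is infinite, giving infinitely many $k$-rational points of the punctured line, or $k$ is finite hence perfect). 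The paper's route is shorter and purely algebraic; yours is more geometric and makes visible why the excluded configurations $r=s=1$ and $r=e=1$ must be excluded.
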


\begin{proof}
Let $\phi$ be a $\bZ$-graded exponential map on $C$.
We note that this grading induces a degree function on $C$, with $\deg \bar{x}=0$, $\deg \bar{z}=1$, $\deg \bar{y}=r$ and $\deg \bar{t}=rs$.
Let 
$$
R= \dfrac{k[X,Y,T]}{(X^{e}T-Y^{s})}.
$$
We identify $R$ as a subring of $C$ identifying the images of 
$X$, $Y$ and  $T$ in $R$ with $\bar{x}$, $\bar{y}$ and $\bar{t}$ in $C$. Note that $R \hookrightarrow \bigoplus_{i \in r\bZ}C_i$.
We show that $C^{\phi} \subseteq R.$ 

We first note that any element $f \in C$ can be uniquely written as 
$$
f= \sum_{i=0}^{r-1}f_i\bar{z}^{i},
$$
for some $f_i \in R.$ 
For if, 
$$
\deg(f_i\bar{z}^{i})=\deg(f_j\bar{z}^{j}) \text{~for some~} 0\le i, j \le r-1,
$$
then $\deg(f_i) + i = \deg(f_j) + j.$  
Since $f_i,f_j \in R$,  we have 
$$
\deg(f_i)-\deg(f_j) \equiv 0 \mod r,
$$ i.e., 
$i-j \equiv 0 \mod r$
which implies $i=j$. 
 
Suppose, if possible, that $C^{\phi} \nsubseteq R$. 
Then, as $C^{\phi}$ is a graded subring of $C$, $f_i\bar{z}^{i} \in C^{\phi}$ 
for some $f_i \in R$ and $i >0.$ 
By Lemma \ref{exp3}(i),  $\bar{z} \in C^{\phi}.$ Using the relations $\bar{x}^{d}\bar{y}=\bar{z}^r$ 
and $\bar{x}^{e}\bar{t}=\bar{y}^s,$ we see that $\bar{x},\bar{y},\bar{t} \in C^{\phi}$, i.e., $\phi$ is trivial, which is a contradiction. 
Hence, $C^{\phi} \subseteq R.$

We now show that $C^{\phi} \subseteq k[\bar{x}]$.
Any element $g\in R$ can be written as 
$$
g= \sum_{0 \leq i < s} g_i(\bar{x})\bar{y}^i + \sum_{\substack{i > 0\\0 \leq j <s}}g_{ij}(\bar{x})\bar{t}^i\bar{y}^j,
$$
for some $g_i, g_{ij} \in k[\bar{x}].$ 
Note that 
$$
\deg(g_i(\bar{x})\bar{y}^i) = ir < sr  \text{~if~}  i<s \text{~and~}
\deg(g_{ij}(\bar{x})\bar{t}^i\bar{y}^j) = (irs + jr) \text{~if~} i >0,0 \leq j<s.
$$ 
Thus a homogeneous element of $C$ in $R$ is of the form 
$g_{i}(\bar{x})\bar{y}^i$ for some $0\le i< s$ or $g_{ij}(\bar{x})\bar{t}^i\bar{y}^j$ for some $i >0$ and $0\le j<r$.
As $C^{\phi}$ is a graded subring of $C$, we have either $g_{i}(\bar{x})\bar{y}^i \in C^{\phi}$ for some $0\le i< s$ 
or $g_{ij}(\bar{x})\bar{t}^i\bar{y}^j$ for some $i >0$ and $0\le j<r$.
Suppose $g_{ij}(\bar{x})\bar{t}^{i}\bar{y}^{j} \in C^{\phi}$ for some $i>0$ 
and $0 \leq j <s.$ 
Then $C^{\phi}$ being factorially closed in $C,$ 
$\bar{t}\in C^{\phi}$ and so $\phi$ extends to a non-trivial exponential map of the ring 
$A:=\dfrac{k(T)[X,Y,Z]}{(X^{d}Y-Z^{r},X^{e}T-Y^{s})}$  (cf. Lemma \ref{exp3}). 
But since one of the conditions  of (\ref{mlc}) is satisfied, the ring $A$ 
is a non-normal ring of dimension one. Hence $\phi$ must be a trivial map (cf. Lemma \ref{exp3}(iii)), 
which is a contradiction. Hence, $\bar{t} \notin C^{\phi}.$
	
Therefore, $g_{i}(\bar{x})\bar{y}^{i} \in C^{\phi}$ for some $0 \leq i <s.$ If $i>0,$ then $C^{\phi}$ being factorially closed in $C,$ 
we have, $\bar{y} \in C^{\phi}$. Using the relations $\bar{x}^{e}\bar{t}=\bar{y}^{s}$ and 
$\bar{x}^{d}\bar{y}=\bar{z}^{r},$ we get, $\bar{x},\bar{z},\bar{t} \in C^{\phi},$ i.e., $\phi$ is trivial, which is a contradiction. 
Hence $i=0$ and $C^{\phi} \subseteq k[\bar{x}]$.
\end{proof}

\begin{lem}\label{exp1}
 There exists a non-trivial exponential map $\phi$ on $B$ such that $B^{\phi}=k[x]$. 
\end{lem}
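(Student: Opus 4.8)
The plan is to obtain $\phi$ by restricting a translation exponential map defined on the localization $B[x^{-1}]$. First I would record that inverting $x$ trivializes both defining relations: from $x^{d}y=P(x,z)$ and $x^{e}t=Q(x,y,z)$ we get $y=P(x,z)/x^{d}$ and $t=Q(x,y,z)/x^{e}$, so $B[x^{-1}]=k[x,x^{-1},z]$, a polynomial ring in $z$ over $k[x,x^{-1}]$ in which $x$ is a unit. On this ring define, for an integer $N$ to be fixed later, the map $\Phi\colon k[x,x^{-1},z]\to k[x,x^{-1},z][U]$ by $\Phi(x)=x$ and $\Phi(z)=z+x^{N}U$. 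This is the translation in the $z$-coordinate, and both exponential-map axioms are immediate identities ($\varepsilon_0\Phi=\mathrm{id}$ and $\Phi_V\Phi_U(z)=z+x^{N}(U+V)=\Phi_{U+V}(z)$); it is non-trivial since $\Phi(z)\ne z$.

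Next I would show that for $N$ large enough $\Phi$ carries $B$ into $B[U]$, so that $\phi:=\Phi|_B$ is an exponential map on $B$ with $\phi(x)=x$. As $\Phi(x),\Phi(z)\in B[U]$ automatically, it suffices to treat $y$ and $t$. Using the characteristic-free Taylor (Hasse) expansion $P(x,z+x^{N}U)=\sum_{j\ge 0}P_{[j]}(x,z)(x^{N}U)^{j}$, one gets $\Phi(y)=P(x,z+x^{N}U)/x^{d}=y+\sum_{j\ge 1}P_{[j]}(x,z)x^{Nj-d}U^{j}$, which lies in $B[U]$ once $N\ge d$, and is moreover divisible by $x^{N-d}$ in $B[U]$. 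Expanding $Q$ the same way about $(y,z)$ and substituting, every non-constant term of $\Phi(x^{e}t)=Q(x,\Phi(y),\Phi(z))$ carries a factor $x^{a(N-d)+Nb}$ with $a+b\ge 1$, hence a factor $x^{N-d}$; choosing $N\ge d+e$ makes each such term divisible by $x^{e}$, so $\Phi(x^{e}t)=x^{e}(t+w)$ with $w\in B[U]$, and cancelling the non-zero-divisor $x^{e}$ in the domain $B[x^{-1}][U]$ yields $\Phi(t)=t+w\in B[U]$. Thus $N:=d+e$ works. This divisibility bookkeeping—specifically pushing the $t$-image back into $B[U]$, which is exactly what forces $N\ge d+e$—is the main obstacle; the rest is formal, with the exponential-map axioms for $\phi$ inherited from $\Phi$ via the inclusion $B\hookrightarrow B[x^{-1}]$.

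Finally I would compute $B^{\phi}$. The inclusion $k[x]\subseteq B^{\phi}$ is clear. For the reverse inclusion, note $x\in B^{\phi}\setminus\{0\}$, so by Lemma \ref{exp3}(vi) the localized map $S^{-1}\phi$ with $S=\{x^{n}\}$ is precisely $\Phi$ on $B[x^{-1}]=k[x,x^{-1},z]$, and its invariant ring is $B^{\phi}[x^{-1}]$. Writing $f=\sum_j c_j(x)z^{j}$ with $c_j\in k[x,x^{-1}]$, the coefficient of $U^{j_0}$ in $\Phi(f)$ (where $j_0=\deg_z f$) equals $c_{j_0}(x)x^{Nj_0}$, nonzero if $j_0\ge 1$; hence the $\Phi$-invariants are exactly the $z$-free elements, i.e. $(B[x^{-1}])^{\Phi}=k[x,x^{-1}]$, giving $B^{\phi}[x^{-1}]=k[x,x^{-1}]$ and so $B^{\phi}\subseteq B\cap k[x,x^{-1}]$. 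It then remains to see $B\cap k[x,x^{-1}]=k[x]$: if some $g\in B$ equalled a Laurent polynomial in $x$ with a genuine negative power, clearing denominators by multiplying by a suitable $x^{m}$ would place a nonzero constant into $xB$, forcing $x$ to be a unit; but $B/xB\cong k[Y,Z,T]/(P(0,Z),Q(0,Y,Z))\ne 0$ (the ideal is proper since $P(0,Z)$ and $Q(0,Y,Z)$ have a common zero), so $x$ is a non-unit. Therefore $B^{\phi}=k[x]$, as claimed.
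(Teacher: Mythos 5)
Your proposal is correct and builds exactly the same exponential map as the paper (the translation $\phi(x)=x$, $\phi(z)=z+x^{d+e}U$, with $\phi(y),\phi(t)$ forced by the relations), so the core approach coincides; you additionally supply the divisibility bookkeeping showing $N=d+e$ suffices to keep $\phi(y)$ and $\phi(t)$ inside $B[U]$, which the paper dismisses as ``easy to see''. The one genuine divergence is the computation of $B^{\phi}$: the paper argues abstractly that $k[x]\subseteq B^{\phi}$, that $k[x]$ is algebraically closed in $B$, and that $\td_{k}B^{\phi}=1$, so Lemma \ref{exp3}(ii),(iv) forces $B^{\phi}=k[x]$; you instead localize at $x$ via Lemma \ref{exp3}(vi), observe that the invariants of the translation on $B[x^{-1}]=k[x,x^{-1},z]$ are exactly $k[x,x^{-1}]$, and intersect back with $B$. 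The two routes are of comparable length and both ultimately rest on the same unproved-in-the-paper fact, namely $B\cap k(x)=k[x]$ (in your version $B\cap k[x,x^{-1}]=k[x]$), which you, unlike the paper, actually verify by noting that $x$ is not a unit in $B$ since $B/xB\cong k[Y,Z,T]/(P(0,Z),Q(0,Y,Z))\neq 0$. So your write-up is, if anything, more complete than the original.
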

\begin{proof}
Consider the  map $\phi: B \to B[U]$ defined  by, 
\begin{eqnarray*}\label{exponential}
\phi(x)&=&x,\\
\phi(z)&=& z+ x^{d+e}U,\\
\phi(y)&=& \dfrac{P(x,z+x^{d+e}U)}{x^{d}}= y + U\alpha(x,z,U),\\
\phi(t)&=&\dfrac{Q(x,y+U\alpha(x,z,U),z+x^{d+e}U)}{x^{e}}= t+ U\beta(x,y,z,U),
\end{eqnarray*} where $\alpha(x,z,U) \in k[x,z,U], \beta(x,y,z,U) \in k[x,y,z,U].$
It is easy to see that $\phi$ is an exponential map on $B$. 
Clearly $k[x]\subseteq B^{\phi}$. 
Since $k[x]$ is algebraically closed in $B$ and $\td_{k[x]}B=1,$ 
using Lemma \ref{exp3}, we see that $B^{\phi}=k[x]$. 
\end{proof}

\begin{thm}\label{ml} Let $B$ be as in (\ref{B}) and let the 
parameters $r,s$ and $e$ in $B$ satisfy the conditions (\ref{mlc}) of Lemma \ref{mll}. Then 
$\ml(B)=k[x]$.
\end{thm}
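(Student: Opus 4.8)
The plan is to prove the two inclusions $\ml(B)\subseteq k[x]$ and $k[x]\subseteq \ml(B)$ separately. The first is immediate: Lemma \ref{exp1} produces a non-trivial exponential map $\phi_0$ on $B$ with $B^{\phi_0}=k[x]$, and since $\ml(B)=\bigcap_{\phi}B^{\phi}\subseteq B^{\phi_0}$, we get $\ml(B)\subseteq k[x]$. The whole content is therefore the reverse inclusion, which I would reduce to the assertion that $x\in B^{\phi}$ for every non-trivial exponential map $\phi$ on $B$ (for the trivial map $B^\phi=B\ni x$). Once this is known, $\ml(B)$, being a subring containing $k$ and $x$, contains $k[x]$, and we are done.

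So fix a non-trivial exponential map $\phi$ on $B$. I would first transport $\phi$ down to $C$ through the two graded degenerations of Lemmas \ref{filt1} and \ref{filt2}. Since the filtration of Lemma \ref{filt1} is admissible with $\gr(B)\cong D$, Theorem \ref{MDH} gives a non-trivial homogeneous exponential map $\bar\phi$ on $D$ with $\rho(B^{\phi})\subseteq D^{\bar\phi}$, where $\rho$ is the associated leading-form map. Applying Lemma \ref{filt2} and Theorem \ref{MDH} once more to $\bar\phi$ yields a non-trivial homogeneous exponential map $\hat\phi$ on $\gr(D)\cong C$ with $\rho'(D^{\bar\phi})\subseteq C^{\hat\phi}$, where $\rho'$ is the second leading-form map. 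As the parameters satisfy (\ref{mlc}), Lemma \ref{mll} gives $C^{\hat\phi}\subseteq k[\bar x]$, and composing we obtain $\rho'(\rho(B^{\phi}))\subseteq k[\bar x]$.

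I would then upgrade this to a statement on $D$, namely $\rho(B^{\phi})\subseteq k[\tilde x]$. The point is that $k[\bar x]$ lies in the degree-$0$ part of the grading on $C$ used in Lemma \ref{mll} (where $\deg\bar x=0$, $\deg\bar z=1$, $\deg\bar y=r$, $\deg\bar t=rs$); hence for $b\in B^{\phi}$ the condition $\rho'(\rho(b))\in k[\bar x]$ forces the $\tilde z$-degree of $\rho(b)$ to be $0$. The degree-$0$ part of $D$ for the filtration of Lemma \ref{filt2} is exactly $k[\tilde x]$, since any element of $D$ whose image in $k[\tilde x,\tilde x^{-1},\tilde z]$ is free of $\tilde z$ can only be built from $\tilde x$. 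As $\rho(b)$ is also homogeneous for the first filtration, in which $\tilde x$ has degree $-1$, I conclude $\rho(b)=\lambda\tilde x^{m}$ for some $\lambda\in k$ and $m\ge 0$; in particular the first-filtration degree of $b$ is $\le 0$.

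The main obstacle, and the step that must use the concrete structure rather than a formal transfer, is the final descent from leading-form data back to $B$: it is simply false in general that ``$\rho(b)$ invariant'' implies ``$b$ invariant'', so I would instead combine factorial closedness with an explicit description of the bottom of the first filtration. Using the normal form (\ref{e1}) and the weights $\deg x=-1,\ \deg z=0,\ \deg y=d,\ \deg t=ds+e$, every monomial involving $y$ or $t$ has strictly positive first-filtration degree; hence any element of $B_{-1}$ is free of $y$ and $t$ and divisible by $x$, i.e. $B_{-1}=x\,k[x,z]$. Now take any non-constant $b\in B^{\phi}$, which exists because $\td_k B^{\phi}=1$ by Lemma \ref{exp3}(iv). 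By the previous paragraph $\rho(b)=\lambda\tilde x^{m}$: if $m\ge 1$ then $b\in B_{-1}$, while if $m=0$ then $\rho(b)=\lambda\in k^{*}$ and $b-\lambda$ is a nonzero element of $B^{\phi}\cap B_{-1}$. In either case $B^{\phi}$ contains a nonzero element of $B_{-1}=x\,k[x,z]$, which factors as $x\cdot h$ with $h\in k[x,z]$; by factorial closedness of $B^{\phi}$ (Lemma \ref{exp3}(i)) this gives $x\in B^{\phi}$. Thus $x\in B^{\phi}$ for every non-trivial $\phi$, whence $k[x]\subseteq\ml(B)$, and together with the first inclusion $\ml(B)=k[x]$.
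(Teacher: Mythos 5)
Your proposal is correct, and its main body coincides with the paper's proof: both pass through the two admissible filtrations of Lemmas \ref{filt1} and \ref{filt2}, apply Theorem \ref{MDH} twice to push a non-trivial exponential map down to a homogeneous one on $C$, and invoke Lemma \ref{mll} to land in $k[\bar x]$. Where you diverge is the final descent, and there your version is actually the more careful one. The paper asserts that $\rho(f)\in k[\tilde x]$ forces $f\in k[x]$ ``from the filtration and equation (\ref{e1})''; taken literally for a single element this inference fails (e.g.\ $f=x+x^2z$ has leading form $\tilde x\in k[\tilde x]$ but $f\notin k[x]$), so something more is needed. You correctly identify this as the delicate point and supply it: a homogeneous element of $k[\tilde x]$ is $\lambda\tilde x^m$ with $m\ge 0$, the bottom piece $B_{-1}$ of the first filtration equals $x\,k[x,z]$ by the weight count on (\ref{e1}), and hence a non-constant invariant (which exists since $\td_k B^{\phi}=1$) yields, after subtracting a constant if $m=0$, a nonzero invariant divisible by $x$, whence $x\in B^{\phi}$ by factorial closedness. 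You then also organize the conclusion differently: rather than proving $B^{\phi}=k[x]$ for every non-trivial $\phi$ (which the paper gets from factorial/algebraic closedness plus $\td_k B^{\phi}=1$, and which your argument would also give if pushed one step further), you only need $x\in B^{\phi}$ for all $\phi$ to get $k[x]\subseteq\ml(B)$, and combine with Lemma \ref{exp1} for the reverse inclusion. Both organizations are valid; yours isolates the minimal claim needed and, in passing, repairs the one step the paper leaves implicit.
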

\begin{proof}
We show that if $\phi$ is any non-trivial exponential map of $B$, then $B^{\phi}=k[x]$.
 
Let $\phi$ be a non-trivial exponential map of $B$.  
Consider the admissible proper $\bZ$-filtration $\{B_n\}_{n \in \bZ}$ on $B$ defined in Lemma \ref{filt1}
and let $\rho$ denote the canonical map $B \to gr(B)=D$.
By Theorem \ref{MDH}, 
$\phi$ induces a non-trivial exponential map $\tilde{\phi}$
on $D$ such that $\rho(B^{\phi}) \subseteq {D}^{\widetilde{\phi}}$.
Let $f\in B^{\phi}$. Replacing $f$ by $f-\lambda$ for some 
$\lambda \in k^*$, we may assume that $\rho(f) \notin k$. 

Again consider the admissible proper $\bZ$-filtration 
$\{D_n\}_{n \in \bZ}$ of $D$
defined in Lemma \ref{filt2} and let $\bar{\rho}$ denote the canonical map $D \to \gr(D)= C$. 
By Theorem \ref{MDH}, $\tilde{\phi}$ induces a non-trivial exponential map $\bar{\phi}$
on $C$ such that $\bar{\rho}(D^{\tilde{\phi}}) \subseteq {C}^{\bar{\phi}}$. 
Therefore, $\bar{\rho}(\rho(f))\in  {C}^{\bar{\phi}}$. 
By Lemma \ref{mll},  $ {C}^{\bar{\phi}}\subseteq k[\bar{x}]$, where $\bar{x}$ denotes the image
of $x$ in $C$. Hence $\bar{\rho}(\rho(f)) \in k[\bar{x}] \subseteq C$.
It follows from the filtration defined on $D$ in Lemma \ref{filt2} and equation (\ref{e2}) 
that $\rho(f)\in k[\tilde{x}] \subseteq D$, where $\tilde{x}$ denotes the image of $x$ in $D$.
Again from the filtration defined on $B$ in Lemma \ref{filt1} and equation (\ref{e1}), it follows  
that $f \in k[x] \subseteq B$. Thus $B^{\phi} \subseteq k[x]$. 
Since $B^{\phi}$ is a factorially closed subring of $B$ of transcendence degree $1$ over $k$, we have,
$B^{\phi}=k[x]$.
This being true for any non-trivial exponential map $\phi$ on $B$, we have by Lemma \ref{exp1}, 
$\ml(B)=k[x]$.
\end{proof}

\begin{rem}
{\em 
Let $B$ be as in (\ref{B}) and suppose the parameters $r,s$ and $e$ in $B$ do not 
satisfy the conditions (\ref{mlc}) of Lemma \ref{mll}, i.e., either \{$r=s=1$\} or \{$r=e=1$ and $s \ge 2$\}.
If $r=s=1$, then $B \cong k^{[2]}$ and hence $\ml(B)=k$. If $r=e=1$ and $s \ge 2$, then 
$B \cong k[X,Y,Z]/(XZ-f(Y))$ for some polynomial $f(Y) \in k[Y]$. In this case also $\ml(B)=k$ (cf. \cite[p.247]{Fr}).
}
\end{rem}

\subsection{Isomorphism Classes}\label{ISC}
We now investigate isomorphism classes of a family of surfaces  which includes the double Danielewski surfaces.
We consider two such surfaces which, for convenience, we denote by $B_1$ and $B_2$
(not to be confused with the graded components of $B$ in Section \ref{mlb}):
$$
B_1= \dfrac{k[X,Y,Z,T]}{(X^{d_1}Y - P_1(X,Z), X^{e_1}T - Q_1(X,Y,Z))} 
$$
and 
$$
B_2= \dfrac{k[X,Y,Z,T]}{(X^{d_2}Y - P_2(X,Z), X^{e_2}T - Q_2(X,Y,Z))},
$$
where $d_1,e_1,d_2,e_2 \in \bN$, $P_1(X,Z),P_2(X,Z) \in k[X,Z]$ are monic polynomials in $Z$,
$Q_1(X,Y,Z)$, $Q_2(X,Y,Z) \in k[X,Y,Z]$ are monic polynomials in $Y$, 
with $r_i=\deg_ZP_i(X,Z)$ and  $s_i=\deg_YQ_i(X,Y,Z)$ for $i=1, 2$.
Let $x_1,y_1,z_1,t_1$ and $x_2,y_2,z_2,t_2$ denote the images of $X,Y,Z,T$ in $B_1$ and $B_2$ respectively.
Suppose that the conditions (\ref{mlc}) of Lemma \ref{mll} are satisfied by ($r_i, s_i, e_i$) for $i=1,2$. 
Then from Theorem \ref{ml}, $\ml(B_i)= k[x_i]$ for $i=1,2$. 

\begin{thm}\label{isomclass}
Suppose $B_1\cong B_2$. Then the following conditions hold:
\begin{enumerate}
\item[\rm(I)] $(d_1,e_1, r_1, s_1)=(d_2,e_2,r_2, s_2)$.  Let $(d,e,r,s)=(d_i, e_i, r_i, s_i)$ for $i=1,2$.
\item[\rm(II)] There exist $\lambda, \gamma \in k^*$, $\delta(X) \in k[X]$, $f(X,Z) \in k[X,Z]$ and $h(X,Y,Z) \in k[X,Y,Z]$ such that 
\begin{enumerate}
\item [\rm(i)] $P_2(\lambda X, \gamma Z+ \delta(X))= \tau P_1(X, Z)+ X^df(X,Z)$, where $\tau= \gamma^r (\in k^*)$.
In particular, $P_2(0,\gamma Z+ \delta(0)) = \tau P_1(0,Z)$. 
\item [\rm(ii)] $Q_2(\lambda X,\nu Y+g(X,Z),\gamma Z+ \delta(X))= \kappa Q_1(X,Y,Z)+ X^eh(X,Y,Z)$, 
where $\nu= \lambda^{-d}\tau$, $\kappa= \nu^s$ and $g(X,Z)= \lambda^{-d}f(X,Z)$. 
In particular,\\ $Q_2(0,\nu Y + g(0,Z), \gamma Z+ \delta(0)) = \kappa Q_1(0,Y,Z)$.
\end{enumerate}
\end{enumerate}
Moreover, if $\psi: B_2\to B_1$ is an isomorphism, then 
$$
\psi(x_2)=\lambda x_1,~ \psi(z_2)=\gamma z_1+ \delta(x_1),
$$
$$
 ~\psi(y_2)= \nu y_1+g(x_1, z_1)~\text{and}~\psi(t_2)= \lambda^{-e}(\kappa t_1+h(x,y,z)).
$$
Conversely, if conditions (I) and (II) hold, then $B_1 \cong B_2$. 
\end{thm}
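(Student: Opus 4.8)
The plan is to treat the converse first, since it is a direct verification, and then spend the effort on the forward direction. For the converse, I would define $\psi\colon B_2\to B_1$ by the four displayed formulas and check that it carries the two defining relations of $B_2$ into $B_1$: using $\lambda^d\nu=\tau$, $\lambda^d g=f$, $\kappa=\nu^s$ one computes $\psi(x_2)^{d}\psi(y_2)=\tau P_1(x_1,z_1)+x_1^{d}f(x_1,z_1)$ and $\psi(x_2)^{e}\psi(t_2)=\kappa Q_1(x_1,y_1,z_1)+x_1^{e}h(x_1,y_1,z_1)$, which match $\psi(P_2(x_2,z_2))$ and $\psi(Q_2(x_2,y_2,z_2))$ precisely by (II)(i),(ii). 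Invertibility follows because (I),(II) are symmetric under interchanging the two surfaces with inverted parameters, so the same recipe yields a two-sided inverse. For the forward direction the organizing principle is that an isomorphism must respect the $\ml$-invariant. By Theorem~\ref{ml}, $\ml(B_i)=k[x_i]$, so $\psi(k[x_2])=k[x_1]$ and hence $\psi(x_2)=\lambda x_1+c$ with $\lambda\in k^*$, $c\in k$. To force $c=0$ I would compare fibres over $x=0$: one has $B_i/(x_i)\cong\bigl(k[Y,Z]/(P_i(0,Z),Q_i(0,Y,Z))\bigr)[T]$ with an $r_is_i$-dimensional base, whereas $B_i/(x_i-c')\cong k^{[1]}$ for every $c'\in k^*$. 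Since $r_is_i\ge 2$ under \eqref{mlc}, $B_2/(x_2)\not\cong_k k^{[1]}$, while $\psi$ induces $B_2/(x_2)\cong B_1/(\lambda x_1+c)$; this forces $c=0$, so $\psi(x_2)=\lambda x_1$.

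Next I would pass to the localization $B_i[x_i^{-1}]=k[x_i,x_i^{-1},z_i]$. As $\psi(x_2)=\lambda x_1$ is invertible, $\psi$ extends to an isomorphism $k[x_2,x_2^{-1}][z_2]\to k[x_1,x_1^{-1}][z_1]$ over the induced identification of the Laurent subrings, so $\psi(z_2)=\gamma x_1^{m}z_1+b(x_1)$ with $\gamma\in k^*$, $m\in\bZ$, $b\in k[x_1,x_1^{-1}]$. Reducing modulo $x_1$ and using the induced isomorphism $B_2/(x_2)\cong B_1/(x_1)$, the image of $z_2$ is non-constant (its minimal polynomial over $k$ is $P_2(0,Z)$, of degree $r_2\ge 2$ in the double Danielewski case; the remaining cases of \eqref{mlc} are handled by the analogous reduction using $y$ or $t$), so it cannot map to a constant: this rules out $m>0$, and the same argument applied to $\psi^{-1}$ rules out $m<0$, giving $m=0$. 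Then $b(x_1)=\psi(z_2)-\gamma z_1\in B_1\cap k[x_1,x_1^{-1}]$, and since $k[x_1]=\ml(B_1)$ is algebraically closed in $B_1$ by Lemma~\ref{exp3}(ii), this intersection equals $k[x_1]$; hence $b=\delta(x_1)\in k[x_1]$ and $\psi(z_2)=\gamma z_1+\delta(x_1)$.

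The defining relations then determine the rest. From $x_2^{d_2}y_2=P_2(x_2,z_2)$ I get $\psi(y_2)=\lambda^{-d_2}x_1^{-d_2}P_2(\lambda x_1,\gamma z_1+\delta(x_1))$. Reducing modulo $x_1$ forces $P_1(0,Z)\mid P_2(0,\gamma Z+\delta(0))$, and by symmetry $r_1=r_2=:r$ with $P_2(0,\gamma Z+\delta(0))=\gamma^{r}P_1(0,Z)$, so $\tau=\gamma^{r}$. Passing to $\gr(B_i)=D_i$ of Lemma~\ref{filt1}, the leading form $\rho(\psi(y_2))\in D_1$ pushes in the ambient Laurent ring to $\lambda^{-d_2}\gamma^{r}\tilde x_1^{\,d_1-d_2}\tilde y_1$ (using $P_1(0,\tilde z_1)=\tilde x_1^{d_1}\tilde y_1$); since $\tilde x_1\nmid\tilde y_1$ in $D_1$, membership in $D_1$ forces $d_2\le d_1$, and symmetry gives $d_1=d_2=:d$. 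Writing $\nu=\lambda^{-d}\gamma^{r}$, the difference $\psi(y_2)-\nu y_1$ lies in $B_1$, and I would show its $x_1$-order is non-negative by the same leading-form obstruction (a nonzero polynomial in $\tilde z_1$ of degree $<r$ cannot be divisible by a positive power of $\tilde x_1$ in $D_1$). Identifying $(B_1)_0=k[x_1,z_1]$ from the normal form \eqref{e1} then yields $\psi(y_2)=\nu y_1+g(x_1,z_1)$ with $g\in k[x_1,z_1]$, which unwinds to the exact identity (II)(i) with $f=\lambda^{d}g$.

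The computation for $t$ is entirely parallel: from $x_2^{e_2}t_2=Q_2(x_2,y_2,z_2)$ and the now-known images of $x_2,y_2,z_2$, the same leading-form and normal-form analysis (using that $Q_i$ is monic of degree $s_i$ in $Y$) gives $e_1=e_2=:e$, $s_1=s_2=:s$, together with $\psi(t_2)=\lambda^{-e}(\kappa t_1+h(x_1,y_1,z_1))$, $\kappa=\nu^{s}$, and relation (II)(ii); combining, this establishes (I) and (II) and the stated form of $\psi$. I expect the main obstacle to be these two ``upgrade'' steps, namely promoting the congruences modulo $x_i$ to the exact identities (II)(i),(ii). They rest on a precise description of which elements of $k[x_i,x_i^{-1},z_i]$ actually lie in $B_i$ (equivalently, of the filtration pieces $(B_i)_n$ via the admissible normal forms \eqref{e1}) and on the divisibility behaviour of $\tilde x_i$ in the graded rings $D_i$; keeping exact track of which power of $x_i$ divides each correction term is where the bookkeeping is delicate.
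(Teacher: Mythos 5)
Your proposal is correct and its skeleton coincides with the paper's: the $\ml$-invariant pins down $\psi(x_2)$ up to an affine map, and then $\psi(z_2),\psi(y_2),\psi(t_2)$ are determined successively from the defining relations inside $k[x_1,x_1^{-1},z_1]$, with monicity forcing the parameters to agree; the converse is the same direct verification in both. Two sub-steps are done by genuinely different (and valid) arguments. First, to kill the constant in $\psi(x_2)=\lambda x_1+c$ the paper argues that $c\neq 0$ would make $\lambda x_1+c$ divide $P_2(\lambda x_1+c,\gamma z_1+\delta)$ in $k[x_1,z_1]$, contradicting monicity of $P_2$ in $Z$; you instead compare fibres, using that $B_1/(\lambda x_1+c)\cong k^{[1]}$ for $c\neq0$ while $B_2/(x_2)$ sits over an Artinian base of length $r_2s_2\ge 2$ --- a clean geometric alternative that works uniformly under the conditions (\ref{mlc}). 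Second, for $d_1=d_2$ and $e_1=e_2$ the paper compares the ideals $x_1^{d_1}B\cap k[x_1,z_1]=(x_1^{d_1},P_1)$ and $(x_2^{d_1},x_2^{d_1-d_2}P_2)$ and again invokes monicity, whereas you pass to $\gr(B)$ via Lemma \ref{filt1} and use that $\tilde x_1\nmid\tilde y_1$; these are the same obstruction in two guises, and your version rests on exactly the normal-form bookkeeping (\ref{e1}) that underlies the paper's ideal identities, so the ``delicate'' upgrade steps you flag do go through. One caveat: your argument that $\psi(z_2)=\gamma x_1^{m}z_1+b(x_1)$ has $m=0$ reduces mod $x_1$ and needs the image of $z_2$ in $B_2/(x_2)$ to be non-constant, which fails when $r_2=1$ (the third case of (\ref{mlc})); you acknowledge this but only gesture at the fix, whereas the paper's route --- $k(x_1)[z_1]=k(x_2)[z_2]$ gives $z_2=\gamma(x_1)z_1+\delta(x_1)$ with $\gamma,\delta\in k[x_1]$, and symmetry between $\psi$ and $\psi^{-1}$ forces $\gamma\in k^*$ --- is uniform in $r$. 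This is the only point where your sketch is strictly weaker than the paper's proof, and it is repairable.
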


\begin{proof}
Let $\psi : B_1 \rightarrow B_2$ be a $k$-algebra isomorphism. Replacing $B_1$ by $\psi(B_1)$, 
we may assume that $B_1=B_2=B$. By Theorem \ref{ml}, $\ml(B) = k[x_1] = k[x_2]$ and hence 
$$
x_2= \lambda x_1 + \mu
$$
for some $\lambda \in k^{*}, \mu \in k$ and $k(x_1)[z_1]=k(x_2)[z_2]$. 
Thus, since $B \cap k(x_1)=k[x_1]$, we have, $z_2 = \gamma z_1 + \delta$ for some $\gamma(x_1), \delta(x_1) \in k[x_1]$. Using symmetry, 
we have, $\gamma(x_1) \in k^*$, i.e., 
\begin{equation}\label{z2}
z_2 = \gamma z_1 + \delta(x_1)
\end{equation}
for some $\gamma \in k^{*}, \delta(x_1) \in k[x_1]$. Hence, 
\begin{equation}\label{kxz}
k[x_1,z_1] = k[x_2,z_2].
\end{equation}
As $y_2  \in B \subseteq k[x_1,{x_1}^{-1},z_1]$, 
there exists an integer $n \geq 0$ such that $x_1^{n}y_2 \in k[x_1,z_1]$.
Therefore, since  
$$
{P_2(x_2,z_2)}= {x_2^{d_2}}y_2= (\lambda x_1 + \mu)^{d_2}y_2,
$$
 we have
${x_1^{n}P_2(x_2,z_2)} \in {(\lambda x_1 + \mu)^{d_2}}k[x_1,z_1]$. 
If $\mu \neq 0$, then $(\lambda x_1 + \mu ) \arrowvert P_2(x_2,z_2)$ in $k[x_1,z_1]$, i.e., 
$(\lambda x_1 + \mu ) \arrowvert P_2(\lambda x_1 + \mu,\gamma z_1 + \delta(x_1) )$ in 
$k[x_1,z_1]$. Since $\lambda, \gamma \in k^{*}$, this contradicts that 
$P_2(X,Z)$ is monic in $Z$. Therefore, $\mu=0$ and 
\begin{equation}\label{x2}
x_2 = \lambda x_1
\end{equation}
for some $\lambda \in k^{*}$. 

We now show that $d_1=d_2$. 
Suppose, if possible, that  $d_1 > d_2$, 
Using  (\ref{kxz}) and (\ref{x2}), we have, 
$x_1^{d_1}B \cap k[x_1,z_1] = x_2^{d_1}B \cap k[x_2,z_2]$, i.e., 
$$
(x_1^{d_1}, P_1(x_1,z_1))k[x_1,z_1] = (x_2^{d_1}, x_2^{d_1 - d_2}P_2(x_2,z_2))k[x_2,z_2].
$$
Therefore, $P_1(x_1,z_1) \in (x_2^{d_1}, x_2^{d_1 - d_2}P_2(x_2,z_2))k[x_2,z_2] \subseteq x_1k[x_1,z_1]$, 
%
which contradicts  that $P_1(X,Z)$ is a monic polynomial in $Z$. Hence, $d_1 \le d_2$ and by symmetry, we have 
$$
d_1= d_2= d \text{~say}.
$$
 Thus, we have, 
$$
x_1^{d}B \cap k[x_1,z_1] = x_2^{d}B \cap k[x_2,z_2],
 $$
i.e., 
\begin{equation}\label{pi}
 (x_1^{d}, P_1(x_1,z_1))k[x_1,z_1]  = (x_2^{d},  P_2(x_2,z_2))k[x_2,z_2].
\end{equation}
Thus $P_2(x_2,z_2) = \tau' P_1(x_1,z_1) + x_1^{d} f'$ for some $\tau', f' \in k[x_1, z_1]$. 
Since $P_i(X,Z)$'s are monic in $Z$ (for $i=1,2$),  using (\ref{z2}) and  (\ref{x2}),
 we see that 
$$
r_1 (=\deg_Z P_1)= r_2 (=\deg_Z P_2) =r \text{~say},
$$  and $\tau'  \equiv \gamma^r \mod x_1^{d}k[x_1,z_1]$. 
  Let $\tau= \gamma^r (\in k^{*}).$
 Replacing $\tau'$ by $\tau$, we have, 
 \begin{equation}\label{P2}
  P_2(x_2,z_2) = \tau P_1(x_1,z_1) + x_1^{d} f(x_1,z_1)
 \end{equation}
 for some $f \in k[x_1,z_1]$.
In particular, using (\ref{z2}) and (\ref{x2}),  and putting $x_1=0$ in  (\ref{P2}), we have, 
$$
P_2(0,\gamma z_1 + \delta(0)) = \tau P_1(0,z_1).
$$
Now we have,
\begin{equation}\label{y2}
y_2=\frac{P_2(x_2,z_2)}{x_2^{d}}= \frac{ \tau P_1(x_1,z_1) + x_1^{d} f(x_1,z_1)}{(\lambda x_1)^{d}} = \nu y_1 + g(x_1,z_1),  
\end{equation} 
where  $\nu= {\lambda ^{-d}}\tau \in k^*$ and  $g= {\lambda ^{-d}}f \in  k[x_1,z_1]$. Therefore, 
\begin{equation}\label{kxyz}
k[x_1,y_1,z_1]= k[x_2,y_2,z_2]. 
\end{equation}
We now show that $e_1=e_2$. Suppose, if possible, $e_1  > e_2$. Using (\ref{x2}) and above, we have 
$$
x_1^{e_1}B\cap  k[x_1,y_1,z_1]= x_2^{e_1}B \cap k[x_2,y_2,z_2],
$$
i.e., 
$$
(x_1^{e_1}, Q_1(x_1,y_1,z_1))k[x_1,y_1,z_1] = (x_2^{e_1}, x_2^{e_1 - e_2}Q_2(x_2,y_2,z_2))k[x_2,y_2,z_2].$$ 
Therefore, $Q_1(x_1,y_1,z_1) \in x_2k[x_2,y_2,z_2]= x_1 k[x_1,y_1,z_1]$, which contradicts that 
$Q_1(X,Y,Z)$ is monic in $Y$. Hence, $e_1 \le e_2$ and by symmetry, we have 
$$
e_1=e_2=e \text{~say}.
$$
 Thus, we have, 
$$
x_1^{e}B \cap k[x_1,y_1, z_1] = x_2^{e}B \cap k[x_2,y_2, z_2],
$$
 i.e.,
\begin{equation}\label{qi}
(x_1^{e},Q_1(x_1,y_1,z_1))k[x_1,y_1,z_1] = (x_2^{e},Q_2(x_2,y_2,z_2))k[x_2,y_2,z_2]. 
\end{equation}
Thus, $Q_2(x_2,y_2,z_2) = \kappa' Q_1(x_1,y_1,z_1) + x_1^{e} h'$ for some $\kappa', h' \in k[x_1, y_1, z_1]$. 
  Since $Q_i(X,Y,Z)$'s are monic  in $Y$, using (\ref{z2}) and (\ref{y2}),
 we see that 
$$
s_1 (=\deg_Y Q_1)= s_2 (=\deg_YQ_2) =s \text{~say},
$$
and 
 $\kappa'  \equiv \nu^s \mod x_1^{e}k[x_1,y_1,z_1]$.  Let $\kappa= \nu^s \in k^*$. 
 Replacing $\kappa'$ by $\kappa$, we have,
 \begin{equation}\label{Q2}
  Q_2(x_2,y_2, z_2) = \kappa Q_1(x_1,y_1, z_1) + x_1^{e} h(x_1,y_1,z_1)
 \end{equation}
 for some $h \in k[x_1,y_1, z_1]$.
In particular, using (\ref{z2}),(\ref{x2}),(\ref{y2}) and putting $x_1=0$ in  (\ref{Q2}), we have, 
$$ 
Q_2(0,\nu y_1 + g(0,z_1), \gamma z_1 + \delta(0)) = \kappa Q_1(0,y_1,z_1).
$$
Hence,
\begin{equation}\label{t2}
t_2=\frac{Q_2(x_2,y_2,z_2)}{x_2^{e}}= \frac{ \kappa Q_1(x_1,y_1, z_1) + x_1^{e} h}{(\lambda x_1)^{e}} = 
\frac{(\kappa t_1 + h)}{\lambda ^{e}}.
\end{equation} 

\smallskip 

Conversely, suppose conditions (I) and (II) hold.
Consider the $k$-algebra map\\ $\phi: k[X,Y,Z,T] \to B_1$ defined by
\begin{eqnarray*}\label{cisom}
\phi(X)&=&\lambda x_1,\\
\phi(Z)&=& \gamma z_1+ \delta(x_1),\\
\phi(Y)&=& \nu y_1+ g(x_1, z_1),\\
\phi(T)&=& \theta t_1 +h''(x_1, y_1, z_1),
\end{eqnarray*}
 where $\nu= \lambda^{-d}\gamma^r$, $\theta= {\lambda ^{-e}}\nu^s$, $g(x_1, z_1)= \lambda^{-d}f(x_1, z_1)$ and 
$h''(x_1, y_1, z_1)= {\lambda ^{-e}}h(x_1, y_1, z_1)$. Then clearly, 
$$
\phi(X^{d}Y - P_2(X,Z))=\phi(X^{e}T - Q_2(X,Y,Z))=0.
$$ 
Thus $\phi$ induces a $k$-linear map $\bar{\phi}: B_2\to B_1$, which is surjective. 
Since both $B_1$ and $B_2$ are of the same dimension, we have 
$\bar{\phi}$ is an isomorphism.
\end{proof}

It follows from the above result that no member of the family of double Danielewski
surfaces is isomorphic to a member of the family of Danielewski surfaces. 

\begin{cor}\label{cdistinct}
Let $A$ be any Danielewski surface, i.e.,   $
A= \dfrac{k[X,Z, V]}{(X^nV- f(X, Z))},
$
where $n \geq 2$, $\deg_Zf(X,Z) \geq 2$ and $f(0,Z) \neq 0$.
Let $B$ be a double Danielewski surface, i.e., $B$ be as in (\ref{B}) with the parameters $r \ge 2$ and $s\ge 2$. 
Then $A$ is not isomorphic to $B$. 
\end{cor}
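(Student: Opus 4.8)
The plan is to realize the Danielewski surface $A$ as a member of the very family (\ref{B}) to which $B$ belongs, but with the degree parameter $s$ forced to equal $1$, and then to invoke the rigidity statement of Theorem \ref{isomclass} — whose conclusion pins down the quadruple $(d,e,r,s)$ — to produce a numerical contradiction. This is natural because the subsection preceding Theorem \ref{isomclass} works with a family that \emph{includes} the double Danielewski surfaces but is broader, requiring only the form (\ref{B}) together with condition (\ref{mlc}); in particular the branch ``$r\ge 2$ and $s=1$'' of (\ref{mlc}) is allowed, and that is exactly the Danielewski case.

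First I would place $A$ inside the family. After the standard normalization making $f$ monic in $Z$ (absorbing the leading $Z$-coefficient, a unit of $k$, into $V$), set $d=1$, $e=n-1$ (legitimate since $n\ge 2$), $P(X,Z):=f(X,Z)$ and $Q(X,Y,Z):=Y$, and form $\tilde B:=k[X,Y,Z,T]/(XY-f(X,Z),\,X^{n-1}T-Y)$. Eliminating $Y=X^{n-1}T$ from the two relations gives the single relation $X^nT-f(X,Z)$, so $\tilde B\cong A$ with $T$ in the role of $V$. Here $P=f$ is monic in $Z$ of degree $r=\deg_Z f\ge 2$ and $Q=Y$ is monic in $Y$ of degree $s=1$, so the triple $(r,s,e)=(r,1,n-1)$ falls under the branch ``$r\ge 2$ and $s=1$'' of (\ref{mlc}); thus Theorem \ref{ml} gives $\ml(A)=k[X]$, in agreement with $\ml(B)=k[x]$. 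Now I would apply Theorem \ref{isomclass} with $B_1:=\tilde B$ and $B_2:=B$: both lie in the family and satisfy (\ref{mlc}) — $B_1$ via the branch just checked, and $B_2$, a double Danielewski surface, via ``$r\ge 2$ and $s\ge 2$''. If $A\cong B$, then $B_1\cong B_2$, whence part (I) of Theorem \ref{isomclass} yields $(d_1,e_1,r_1,s_1)=(d_2,e_2,r_2,s_2)$ and in particular $s_1=s_2$. But $s_1=1$ while $s_2=s\ge 2$, a contradiction; hence $A\not\cong B$.

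The one point requiring care is the opening step: one must confirm that $A$ honestly lands in the family (\ref{B}) under its monic conventions. Concretely, this means checking that the leading $Z$-coefficient of $f$ is a unit, so that $f$ may be taken monic in $Z$ (the customary normalization for Danielewski surfaces, with the hypotheses $\deg_Z f\ge 2$ and $f(0,Z)\ne 0$ ensuring that the resulting member genuinely has $\ml(A)=k[X]$). Once $f$ is monic in $Z$, the elimination of $Y$ and the verification of condition (\ref{mlc}) are routine, and the contradiction follows at once from the equality of the parameter tuples forced by Theorem \ref{isomclass}. The remaining steps involve no estimation and no further computation beyond these bookkeeping checks.
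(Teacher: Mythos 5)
Your proof is correct and follows essentially the same route as the paper: both realize the Danielewski surface as a member of the family (\ref{B}) with second degree parameter equal to $1$ (the paper writes $f = f_0(Z) + Xf_1(X,Z)$ and uses the presentation $k[X,Z,Y,V]/(XY - f_0(Z),\, X^{n-1}V - Y - f_1(X,Z))$, whereas you take $P=f$ and $Q=Y$ directly), and then invoke Theorem \ref{isomclass}(I) to obtain the contradiction $s_1 = 1 \neq s_2 = s \ge 2$. The monicity normalization you flag is likewise implicit in the paper's argument, so there is no substantive difference.
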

\begin{proof} 
%
%
Let $f(X,Z)= f_0(Z) + Xf_1(X,Z)$. 
Then the ring $A\cong A'$, where
$$
A'= k[X,Z,Y,V]/(XY-f_0(Z), X^{n-1}V-Y-f_1(X,Z)).
$$
Note that the ring $A'$ is of the form in (\ref{B}) with $\deg_Y (Y-f_1(X,Z))=1$.
Hence, by Theorem \ref{isomclass}, $B \ncong A'$, as the $\deg_Y Q(X,Y,Z)=s\ge 2$. 
\end{proof}

Below we deduce a few properties of automorphisms of double Danielewski surfaces.
\begin{thm}\label{auto}
Let $B$ be as in (\ref{B}) and let the parameters $r, s$ and $e$ satisfy the
conditions (\ref{mlc}) of Lemma \ref{mll}.  Let $R$ denote the subring $k[x,y,z]$ of $B$. Let $\psi \in \aut(B)$. Then:
\begin{enumerate}
	\item[\rm(i)] $\psi(k[x,z]) = k[x,z]$.
	\item[\rm(ii)] $\psi(x) =\lambda x$ for some $\lambda \in k^*$.
	\item[\rm(iii)] $\psi((x^{d},P(x,z))k[x,z]) = (x^{d},P(x,z))k[x,z]$.
	\item[\rm(iv)] $\psi(k[x,y,z])= k[x,y,z]$. 
	\item[\rm(v)] $\psi((x^{e},Q(x,y,z))R) = (x^{e},Q(x,y,z))R$.
	\item[\rm(vi)] $\psi(t)=at+b$, where $a \in k^*$ and $b \in R$. 
         \end{enumerate}
\end{thm}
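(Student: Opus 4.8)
The plan is to derive all six statements from Theorem \ref{isomclass} by specializing it to the case $B_1=B_2=B$. An automorphism $\psi \in \aut(B)$ is in particular a $k$-isomorphism $B\to B$, and since the parameters $(r,s,e)$ satisfy the conditions (\ref{mlc}), the hypotheses of Theorem \ref{isomclass} are met (trivially $B_1\cong B_2$, with $P_1=P_2=P$, $Q_1=Q_2=Q$ and all four numerical parameters equal). I would therefore invoke the ``moreover'' part of that theorem to obtain scalars $\lambda,\gamma\in k^*$ and polynomials $\delta(x)\in k[x]$, $f(x,z)\in k[x,z]$, $h(x,y,z)\in R$ such that
\[
\psi(x)=\lambda x,\quad \psi(z)=\gamma z+\delta(x),\quad \psi(y)=\nu y+g(x,z),\quad \psi(t)=\lambda^{-e}(\kappa t+h),
\]
where $\nu=\lambda^{-d}\gamma^{r}\in k^*$, $\tau=\gamma^{r}\in k^*$, $\kappa=\nu^{s}\in k^*$ and $g=\lambda^{-d}f$. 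Statement (ii) is read off immediately from the first formula, and in particular the fact that there is no constant term (i.e.\ $\psi(x)$ has no shift) is exactly what the proof of Theorem \ref{isomclass} establishes via monicity of $P$.

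Statements (i) and (iv) then follow formally: since $\lambda,\gamma,\nu\in k^*$, one checks that $k[\psi(x),\psi(z)]=k[\lambda x,\gamma z+\delta(x)]=k[x,z]$ (invert $\lambda$ to recover $x$, then $\gamma$ to recover $z$), and likewise $k[\psi(x),\psi(y),\psi(z)]=k[x,y,z]=R$, so $\psi$ maps each subring onto itself. For (iii) and (v) I would use the two congruences recorded in condition (II) of Theorem \ref{isomclass}: with $P_1=P_2$ one has $\psi(P(x,z))=\tau P(x,z)+x^{d}f(x,z)$ and $\psi(x^{d})=\lambda^{d}x^{d}$, so, using that $\psi$ restricts to a ring automorphism of $k[x,z]$ by (i),
\[
\psi\big((x^{d},P(x,z))k[x,z]\big)=(\lambda^{d}x^{d},\,\tau P+x^{d}f)k[x,z]=(x^{d},P(x,z))k[x,z],
\]
since $\lambda^{d},\tau\in k^*$ and $x^{d}f\in (x^{d})$. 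The identical computation with $Q_1=Q_2$, $\psi(Q)=\kappa Q+x^{e}h$ and $\psi(x^{e})=\lambda^{e}x^{e}$ (now using (iv)) gives (v). Finally (vi) is immediate from $\psi(t)=\lambda^{-e}(\kappa t+h)$: take $a=\lambda^{-e}\kappa\in k^*$ and $b=\lambda^{-e}h\in R$.

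Because the entire substantive content is already packaged in Theorem \ref{isomclass}, there is no genuinely difficult step here; the argument is essentially bookkeeping. The one point that warrants care is the logical ordering in (iii) and (v): to pass from ``$\psi$ sends the generators $x^{d},P$ to $\lambda^{d}x^{d},\tau P+x^{d}f$'' to ``$\psi$ sends the \emph{ideal} onto itself'' one must first know that $\psi$ restricts to a \emph{bijective} ring map on $k[x,z]$ (respectively on $R$); otherwise one obtains only an inclusion of the image ideals. This is precisely what (i) and (iv) supply, so I would prove (i) and (iv) first and invoke them when establishing (iii) and (v). Should a self-contained argument be preferred instead of citing Theorem \ref{isomclass}, one could reprove these items directly by the same $\ml$-invariance step (Theorem \ref{ml}) for (ii) together with the factorial-closedness and ideal-comparison computations (the analogues of (\ref{pi}) and (\ref{qi})) for (iii) and (v).
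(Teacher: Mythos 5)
Your proposal is correct and follows essentially the same route as the paper, which proves Theorem \ref{auto} by citing the intermediate equalities (\ref{kxz}), (\ref{x2}), (\ref{pi}), (\ref{kxyz}), (\ref{qi}) and (\ref{t2}) established in the proof of Theorem \ref{isomclass} specialized to $B_1=B_2=B$. Your additional care about first securing the bijectivity of $\psi$ on $k[x,z]$ and on $R$ before transporting the ideals in (iii) and (v) is a sound piece of bookkeeping that the paper leaves implicit.
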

\begin{proof}
 Follows from the proof of Theorem \ref{isomclass} (see (\ref{kxz}), (\ref{x2}), (\ref{pi}), (\ref{kxyz}), (\ref{qi}), (\ref{t2})).
\end{proof}

The next result gives a characterization of any automorphism of $B$.
\begin{thm}\label{autoauto}
Let $B$ be as in (\ref{B}) and let the parameters $r, s$ and $e$ satisfy the
conditions (\ref{mlc}) of Lemma \ref{mll}.
Let $\psi$ be an endomorphism  of $B$ satisfying (i) and (ii) of Theorem \ref{auto}. 
Then $\psi$ is an automorphism of the ring $B$.
\end{thm}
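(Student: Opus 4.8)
The plan is to show that $\psi$ is surjective and then invoke the principle that a surjective endomorphism of a Noetherian ring is injective. Throughout write $R=k[x,y,z]$ and recall that $B=R[t]$ subject to the single relation $x^{e}t=Q(x,y,z)$, so that $B$ is generated as a $k$-algebra by $R$ together with $t$.

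First I would pin down $\psi$ on $k[x,z]$. By hypothesis (i), $\psi$ restricts to a \emph{surjective} $k$-endomorphism of $k[x,z]\cong k^{[2]}$; since $k^{[2]}$ is Noetherian, this restriction is an automorphism of $k[x,z]$. Hypothesis (ii) gives $\psi(x)=\lambda x$ with $\lambda\in k^{*}$, and then $k[x,z]=k[\psi(x),\psi(z)]=k[x,\psi(z)]$ forces $\psi(z)=\gamma z+\delta(x)$ for some $\gamma\in k^{*}$ and $\delta(x)\in k[x]$ (an element of $z$-degree $\ge 2$ in $\psi(z)$ could not generate $z$ over $k[x]$, and the leading coefficient of $\psi(z)$ in $z$ must be a unit of $k[x]$).

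Next I would determine $\psi(y)$ and $\psi(t)$ by running the computation in the proof of Theorem \ref{isomclass} with $(x,y,z,t)$ in place of $(x_1,y_1,z_1,t_1)$ and $(\psi(x),\psi(y),\psi(z),\psi(t))$ in place of $(x_2,y_2,z_2,t_2)$. The key observation is that those derivations never used injectivity of the map: they used only the relations $x^{d}y=P$, $x^{e}t=Q$, the identities $\psi(x)=\lambda x$ and $\psi(z)=\gamma z+\delta(x)$, and the intrinsic ideal-intersection facts $x^{d}B\cap k[x,z]=(x^{d},P(x,z))k[x,z]$ and $x^{e}B\cap R=(x^{e},Q(x,y,z))R$ (cf. (\ref{pi}), (\ref{qi})), all of which are available here. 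Concretely, applying $\psi$ to $x^{d}y=P(x,z)$ gives $\lambda^{d}x^{d}\psi(y)=P(\lambda x,\gamma z+\delta(x))\in x^{d}B\cap k[x,z]=(x^{d},P)k[x,z]$; comparing $z$-degrees as in (\ref{P2}) yields $P(\lambda x,\gamma z+\delta(x))=\gamma^{r}P(x,z)+x^{d}f(x,z)$ with $f\in k[x,z]$, whence $\psi(y)=\nu y+g(x,z)$ with $\nu=\lambda^{-d}\gamma^{r}\in k^{*}$ and $g=\lambda^{-d}f\in k[x,z]$ (cf. (\ref{y2})). Thus $\psi$ maps $R$ into $R$, and since $x\mapsto\lambda x$, $z\mapsto\gamma z+\delta$, $y\mapsto\nu y+g$ is a triangular transformation with unit scalars $\lambda,\gamma,\nu$, it is invertible; hence $\psi(R)=R$. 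Feeding $\psi(y),\psi(z)$ into $x^{e}t=Q$ and using $x^{e}B\cap R=(x^{e},Q)R$ in the same way (cf. (\ref{Q2}), (\ref{t2})) gives $\psi(t)=at+b$ with $a=\lambda^{-e}\nu^{s}\in k^{*}$ and $b\in R$.

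Finally I would conclude. Since $\psi(R)=R$ there is $b'\in R$ with $\psi(b')=b$, and then $t=\psi\bigl(a^{-1}(t-b')\bigr)$, so $t$ lies in the image of $\psi$; as $B$ is generated by $R$ and $t$, the map $\psi$ is surjective. Because $B$ is a finitely generated $k$-algebra it is Noetherian, and a surjective endomorphism of a Noetherian ring is injective: the ascending chain $\ker\psi\subseteq\ker\psi^{2}\subseteq\cdots$ stabilizes at some $\ker\psi^{n}$, and if $\psi(a)=0$ then writing $a=\psi^{n}(b)$ (surjectivity of $\psi^{n}$) gives $b\in\ker\psi^{n+1}=\ker\psi^{n}$, so $a=\psi^{n}(b)=0$. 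Hence $\psi$ is bijective, i.e. an automorphism. The main obstacle is the middle step: one must be certain that the isomorphism-case computation transfers to a mere endomorphism, and this hinges precisely on the fact that that computation used only surjectivity-type input (now supplied by (i) and (ii)) together with intrinsic properties of $B$, and never the bijectivity of the map under analysis.
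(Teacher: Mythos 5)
Your proposal is correct and follows essentially the same route as the paper: reduce to surjectivity via the Noetherian property, use (i) and (ii) to get $\psi(z)=\gamma z+\delta(x)$, apply $\psi$ to the relations $x^{d}y=P(x,z)$ and $x^{e}t=Q(x,y,z)$ together with the intersection identities $x^{d}B\cap k[x,z]=(x^{d},P)k[x,z]$ and $x^{e}B\cap R=(x^{e},Q)R$ to solve for $y$ and $t$ in the image. The only (harmless) difference is that you spell out the kernel-chain argument for ``surjective implies injective,'' which the paper leaves implicit.
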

\begin{proof}
As $B$ is a Noetherian ring, it is enough to show that, $\psi(B) = B$.
Since $B$ is generated by $x,y,z$ and $t$, by (i), it is enough to show that 
$y, t \in \psi(B)$. 

Since $\psi(x)= \lambda x$ for some $\lambda \in k^{*}$ and $\psi(k[x,z])= k[x,z]$, we have 
$\psi(z)=\lambda_2 z +\mu_2$ for some $\lambda_2 \in k^{*}$ and $\mu_2\in k[x]$.
Since $\psi$ is an endomorphism, we have $\psi (x^dy)=\psi(P(x,z))$.
Therefore,
  \begin{align}\label{auty}
\lambda^d x^d \psi(y)&= P(\psi(x), \psi(z))\nonumber\\
&= P(\lambda x, \lambda_2 z +\mu_2)\nonumber \\
&= \tau P(x,z)+ f(x,z)\nonumber \\
&=\tau x^dy +f(x,z), 
 \end{align}
where $\tau= \lambda_2^r \in k^*$, $f(x,z) \in k[x,z]$ and $\deg_Z f(X,Z)< \deg_ZP(X,Z)=r$.  
From (\ref{auty}), we have $f(x,z) \in x^dB \cap k[x,z]=(x^{d},P(x,z))k[x,z]$. 
Since $\deg_Z f(X,Z)< \deg_ZP(X,Z)$ and $P(X,Z)$ is  monic in $Z$, it follows that $f(x,z) = x^d g(x,z)$
for some $g(x,z) \in k[x,z]$. Hence, from (\ref{auty}), we have 
$$
y=\tau^{-1}(\lambda^d \psi(y)-g(x,z))\in k[x,z, \psi(y)] \subseteq \psi(B).
$$
Thus $k[x,y,z]=k[x,\psi(y), z]$.
Now $\psi(x^et)=\psi(Q(x,y,z))$, and hence
\begin{align}\label{autt}
\lambda^e x^e\psi(t)&= Q(\psi(x),\psi(y), \psi(z))\nonumber\\
&= Q(\lambda x, \lambda^{-d}(\tau y+g(x,z)), \lambda_2 z +\mu_2)\nonumber\\
&= \nu Q(x,y,z) + h'(x,y,z)\nonumber\\
&= \nu x^e t +h'(x,y,z), 
\end{align}
where $\nu= (\lambda^{-d}\tau)^s$, $h'(x,y,z) \in k[x,y,z]$ and 
$\deg_Y h'(X, Y,Z) <\deg_Y Q(X,Y,Z)=s$. From (\ref{autt}),
we have $h'(x,y,z) \in x^e B \cap k[x,y,z]=(x^e, Q(x,y,z))k[x,y,z]$.
Since $\deg_Yh'(X,Y,Z)<\deg_YQ(X,Y,Z)$ and $Q(X,Y,Z)$ is monic in $Y$,
we have $h'(x,y,z)= x^e h(x,y,z)$ for some $h(x,y,z) \in k[x,y,z]$.
Hence, from (\ref{autt}), we have 
$$
t=  \nu^{-1}(\lambda^e\psi(t)-h(x,y,z)) \in k[x,y,z, \psi(t)] \subseteq \psi(B).
$$
\end{proof}

We now prove a stable isomorphism property of double Danielewski surfaces.

\begin{thm}\label{stable}
Let 
$$
B_{d, e}= \dfrac{k[X,Y,Z,T]}{(X^{d}Y - P(X,Z), X^{e}T - Q(X,Y,Z))}, 
$$
where $d, e \in \bN$, $P(X,Z)$ is a monic polynomial in $Z$ with $\deg_Z P(X,Z)=r\ge 2$ and $Q(X,Y,Z)$ 
is a monic polynomial in $Y$ with $\deg_YQ(X,Y,Z)=s\ge 2$. 
Let 
$$
P(X,Z)= a_0(X)+a_1(X)Z+ \dots+ a_{r-1}(X)Z^{r-1}+Z^r, 
$$
and 
$$
Q(X,Y,Z)=b_0(X,Z) + b_1(X,Z)Y+ \dots+ b_{s-1}(X,Z)Y^{s-1} + Y^s.
$$
Define
$$
P'(X, Z): = a_1(X)+ \dots+ (r-1)a_{r-1}(X)Z^{r-2}+rZ^{r-1}
$$ 
and
$$
Q'(X,Y,Z)=b_1(X,Z) + 2b_2(X,Z)Y+ \dots+ (s-1)b_{s-1}(X,Z)Y^{s-2} + sY^{s-1}.
$$
Suppose that 
$$
(P(0,Z), P^{'}(0,Z))k[Z]=k[Z] \text{~and~} (P(0,Z), Q(0,Y,Z), Q'(0,Y,Z))k[Y,Z]= k[Y,Z].
$$ 
Then, for  $e\ge 2$, 
$$
{B_{d, e}}^{[1]} \cong {B_{d, e-1}}^{[1]}. 
$$
\end{thm}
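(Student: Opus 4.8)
The plan is to build an explicit $k$-algebra isomorphism between the two stabilized rings by a shearing substitution, using the two regularity hypotheses to divide by the requisite power of $x$. Write $R = k[x,y,z] \subseteq B_{d,e}$, so that $B_{d,e} = R[T]/(x^{e}T - Q)$ and $B_{d,e-1} = R[T']/(x^{e-1}T' - Q)$ are built over the \emph{same} subring $R$ with the \emph{same} first relation $x^{d}y = P(x,z)$; the entire difference is concentrated in the power of $x$ in the second relation. Thus it suffices to produce a $k$-isomorphism $\Theta\colon B_{d,e-1}[W'] \to B_{d,e}[W]$ (one new variable on each side) that fixes $x$, shears $y,z$ by multiples of $x$, and converts the source relation $x^{e-1}T' = Q$ into a relation solvable inside $B_{d,e}[W]$. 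Note $\Theta$ will be a $k$-map, not an $R$-map, precisely because it shears $z$.

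First I would record what the regularity conditions buy. The condition $(P(0,Z),P'(0,Z))k[Z]=k[Z]$ says $P(0,Z)$ is separable; it makes $\Sp R$ a smooth surface and fixes $a(Z),b(Z)$ with $a P(0,Z)+b P'(0,Z)=1$. The condition $(P(0,Z),Q(0,Y,Z),Q'(0,Y,Z))k[Y,Z]=k[Y,Z]$ says the center $\{x=0,\,Q=0\}\subseteq \Sp R$ is reduced and smooth (it lies inside the smooth divisor $\{x=0\}$), and fixes $\alpha,\beta,\gamma \in k[Y,Z]$ with $\alpha P(0,Z)+\beta Q(0,Y,Z)+\gamma Q'(0,Y,Z)=1$. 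Geometrically $\Sp B_{d,e}$ and $\Sp B_{d,e-1}$ are affine modifications of the smooth surface $\Sp R$ along $\{x=0\}$ with the \emph{same} smooth center $\{x=0,\,Q=0\}$, differing only in multiplicity; this is the classical Danielewski configuration, and the two separability relations are exactly the partition-of-unity data needed to lower the multiplicity by one after a single stabilization.

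The construction itself is the heart of the argument. I would set $\Theta(x)=x$ and define $\Theta(z)=z+x^{d}\sigma$, where the corrector $\sigma$ is a \emph{target} element (hence allowed to involve $t$ and the new variable $W$); the first relation then forces $\Theta(y)=P(\Theta(x),\Theta(z))/\Theta(x)^{d}$, which stays in $B_{d,e}[W]$ because the factor $x^{d}$ in the shear keeps $P(x,z+x^{d}\sigma)$ divisible by $x^{d}$. The corrector $\sigma$ is then chosen so that $Q(\Theta(x),\Theta(y),\Theta(z))$ becomes divisible by $x^{e-1}$, with quotient recovering the fibre coordinate $t$; setting $\Theta(T')=Q(\Theta(x),\Theta(y),\Theta(z))/x^{e-1}$ satisfies the source relation $x^{e-1}\Theta(T')=Q(\Theta(\cdot))$, and $\Theta(W')$ is chosen to complete a generating set of the target. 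The elements $a,b,\alpha,\beta,\gamma$ above enter when one Taylor-expands $Q$ along the shear and solves, on the special fibre, for correctors cancelling the low-order-in-$x$ terms: since $\beta Q+\gamma Q'\equiv 1$ modulo $P(0,Z)$, the partial $Q_Y$ is invertible along the center, which is exactly what lets one perform the required ``division by $x$'' by a polynomial substitution.

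Finally I would verify $\Theta$ is an isomorphism: it is a well-defined $k$-algebra map because the images satisfy both defining relations of the source (this is the content of the previous step); it is surjective because $x,y,z$ together with $\Theta(T'),\Theta(W')$ recover $t$ and $W$; and it is injective because both rings are integral domains (Lemma \ref{ID}) of the same transcendence degree over $k$, so the surjection is an isomorphism. Iterating the construction lowers $e$ to $1$. The main obstacle is the choice of $\sigma$ in the third step: a single substitution must simultaneously respect the first relation (keeping $P$ divisible by $x^{d}$) and lower the $x$-multiplicity in the second relation by exactly one, and the two relations are coupled through $y=P/x^{d}$. Matching the orders of vanishing in $x$ forces the separability relations to be applied modulo $P(0,Z)$ along the special fibre rather than globally, and getting this bookkeeping right—so that the quotient by $x^{e-1}$ is both integral and genuinely recovers $t$ and $W$—is where the regularity hypotheses are indispensable and where the argument could break.
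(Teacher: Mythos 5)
Your construction is, in outline, the same one the paper uses: the paper shears $z$ to $f=z+x^{d+e-1}w$ (where $w$ is the new variable on the $B_{d,e}$ side), sets $g=P(x,f)/x^{d}$ and $h=Q(x,g,f)/x^{e-1}$, and shows $k[x,f,g,h]\cong B_{d,e-1}$; your $\sigma$ is $x^{e-1}w$. (Note one point of bookkeeping you flag but do not resolve: shearing $z$ by a multiple of $x^{d}$ is enough to keep $P$ divisible by $x^{d}$, but to make $Q(x,g,f)$ divisible by $x^{e-1}$ you need $g-y\in x^{e-1}A$, which forces the shear of $z$ to be by a multiple of $x^{d+e-1}$.)

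The genuine gap is surjectivity, which you assert in one line but which is where essentially all the work in the paper lies. Recovering $z$ from $\Theta(z)=z+x^{d+e-1}W$ already requires knowing that $W$ is in the image of $\Theta$, and $W$ is exactly the element that is \emph{not} obviously hit: it must be reconstructed from the image of your fifth generator $\Theta(W')$, and you never specify what $\Theta(W')$ is. The paper handles this by running the argument through an exponential map $\phi$ on $A=B_{d,e}[w]$ (with $\phi(z)=z+x^{d+e}U$, $\phi(w)=w-xU$), for which $x,f,g,h$ are invariants, and then doing two nontrivial things: (a) it produces the slice $v=\bigl(w-a(g,f)h\bigr)/x$, verifies $v\in A$ using the partition of unity $\alpha P(0,Z)+\beta Q(0,Y,Z)+\gamma Q'(0,Y,Z)=1$ (which makes $P'(0,z)Q'(0,y,z)$ a unit modulo $x$), and invokes Lemma \ref{exp3}(iii) to get $A=A^{\phi}[v]$; and (b) it proves $A^{\phi}=k[x,f,g,h]$, which does not follow formally --- the paper first checks equality after inverting $x$ and then shows $xE=xA\cap E$ by comparing the dimensions of $E/xE$ and of the image of $E$ in $A/xA$. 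Your proposal contains no substitute for either step: the regularity hypotheses are correctly identified as the source of a unit along $x=0$, but they are never actually deployed to exhibit the fifth generator or to close the surjectivity argument, and you yourself note that this is ``where the argument could break.'' As it stands, the proposal is a correct strategy statement rather than a proof.
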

\begin{proof}
We write $B_{d,e}=B$ for notational convenience. As before, let $x$, $y$, $z$ and $t$ respectively denote 
the images of $X$, $Y$, $Z$ and $T$ in $B$.
Let $\phi: B \to B[U]$ be an exponential map defined on $B$ by  
\begin{eqnarray*}\label{}
\phi(x)&=&x,\\
\phi(z)&=& z+ x^{d+e}U,\\
\phi(y)&=& \dfrac{P(x,z+x^{d+e}U)}{x^{d}}= y + x^{e}U\alpha(x,z,U),\\
\phi(t)&=&\dfrac{Q(x,y+x^{e}U\alpha(x,z,U),z+x^{d+e}U)}{x^{e}}= t+ U\beta(x,y,z,U),
\end{eqnarray*} 
where $\alpha(x,z,U) \in k[x,z,U], \beta(x,y,z,U) \in k[x,y,z,U].$	
Let $A=B[w] = B^{[1]}$ and extend $\phi$ to $A$ by defining $\phi(w)= w - xU.$ 
Let $f = x^{d+e-1}w + z$. Then $f \in A^{\phi}.$ 
Now, 
$$
P(x,f)- P(x,z) = x^{d+e-1}(P'(0,z)w+ x \theta)
$$
for some $\theta \in A.$ Therefore, 
\begin{align}\label{pxf}
	P(x,f) &=P(x,z)+x^{d+e-1}(P'(0,z)w+ x \theta) \nonumber\\
	       &= x^{d}y + x^{d+e-1}(P'(0,z)w+ x\theta)  \nonumber\\
	       &=x^{d}g,
\end{align} 
where $g= y+ x^{e-1}(P'(0,z)w+ x \theta) \in A.$
Note that, since $x^{d}g=P(x,f) \in A^{\phi}$ and 
$A^{\phi}$ is factorially closed in $A$ (cf. Lemma \ref{exp3}), we have $g\in A^{\phi}.$ 
	Now 
	\begin{align*}
	Q(x,g,f)&=Q(x, y+ x^{e-1}(P'(0,z)w+ x \theta),z+  x^{d+e-1}w)\\
	            &=Q(x,y,z)+ x^{e-1}P'(0,z)Q'(0, y, z)w+x^{e}\rho
	\end{align*}
	for some $\rho \in A.$ Therefore,
	\begin{align}\label{qxgf}
	Q(x,g,f) &= x^{e}t + (Q(x,g,f)- Q(x,y,z))  \nonumber\\
	         &= x^{e}t + x^{e-1}(P'(0,z)Q'(0, y, z)w+x\rho) \nonumber\\
	         &= x^{e-1}(P'(0,z)Q'(0, y, z)w+xt + x\rho)  \nonumber\\
	         &= x^{e-1}h
	\end{align} 
where $h= P^{'}(0,z)Q^{'}(0,y,z)w +xt+ x\rho\in A.$ 
Note that, since $Q(x,g,f)= x^{e-1}h \in A^{\phi}$ 
and $A^{\phi}$ is factorially closed in $A$, we have 
$h \in A^{\phi}.$ 
From the given condition we have, $(Q(0,Y,Z),Q^{'}(0,Y,Z)P^{'}(0,Z), P(0,Z))k[Y,Z]=k[Y,Z].$
 Then there exist $a(Y,Z),b(Y,Z),c(Y,Z) \in k[Y,Z]$ such that 
$$ 
Q^{'}(0,Y,Z)P^{'}(0,Z)a(Y,Z) + Q(0,Y,Z)b(Y,Z) + P(0,Z)c(Y,Z) =1.
$$ 
Since $Q(0, y, z) \in xA$ and $P(0, z) \in xA$, we have,
\begin{equation}\label{starr}
Q'(0,y,z)P'(0,z)a(y,z)+ x \delta=1
\end{equation}
for some $\delta \in A$. 

Let $v= \dfrac{w - a(g,f)h}{x}$. We first show that $v \in A.$ 
Note that $a(g, f)-a(y,z)\in xA$. Let $a(g, f)-a(y,z)=x\gamma$ for some $\gamma \in A$. 
 Now
	\begin{align*}
	w-h a(g,f)&=  w-h a(y,z) -h(a(g, f)-a(y,z))\\
	               &= w-ha(y,z)-hx\gamma\\
	               &= w - a(y,z)(P^{'}(0,z)Q^{'}(0,y,z)w + xt+x\rho) -hx\gamma\\
	              &= w(1-a(y,z)P^{'}(0,z)Q^{'}(0,y,z)) - x(a(y,z)t+ a(y,z)\rho+h\gamma)\\
	              &= wx\delta-x(a(y,z)t+ a(y,z)\rho+h\gamma)\in xA
	\end{align*} 
Thus,  $v = \dfrac{w-h a(g,f)}{x} \in A.$ 
Now, since $x,f,g,h \in A^{\phi}$, we have  $\phi(v) = v -U.$ Then, by Lemma \ref{exp3}(iii), 
\begin{equation}\label{slice}
A= A^{\phi}[v]= {(A^{\phi})}^{[1]}. 
\end{equation}
Let $E= k[x,f,g,h].$ Consider indeterminates $X$, $F$, $G$ and $H$  over $k$ so that 
$k[X,F,G,H]= k^{[4]}$ and let  
\begin{equation}\label{ismo}
E_1= \dfrac{k[X,F,G,H]}{(X^{d}G - P(X,F), X^{e-1}H - Q(X,G,F))} \cong B_{d,e-1}.
\end{equation}
We first show that $E \cong E_1$. 
Clearly there exists a surjective $k$-algebra homomorphism 
$\Phi: k[X,F,G,H]\to E$
such that $\Phi(X)=x$, $\Phi(F)=f$, $\Phi(G)=g$ and $\Phi(H)=h$.
Using (\ref{pxf}) and (\ref{qxgf}), we see that $\Phi$ 
induces a surjective $k$-algebra map $\bar{\Phi}: E_1 \to E$.
Note that $A[1/x]=E[1/x][w]=E[1/x]^{[1]}$ and hence dimension of $E$ is two. 
Therefore, as $E_1$ is an integral domain 
(cf. Lemma \ref{ID}) of dimension $2$, $\bar{\Phi}$ is an isomorphism.

We now show that $A^{\phi}= E$.  
Clearly $E \subseteq A^{\phi}$. Since $A[1/x]=E[1/x][w]$, it follows that  
$E[1/{x}]= A^{\phi}[1/x].$ 
Therefore, to show that $E= A^{\phi},$ it is enough to show that $xE=xA^{\phi} \cap E$.
Since $xA^{\phi} = xA \cap A^{\phi}$ by Lemma \ref{exp3}(i), it is enough to show that
$xE= xA \cap E$, i.e., to show that the kernel of the map 
$\iota : E \rightarrow A/xA$
is $xE$. For $u \in A$, let $\tilde{u}$ denote the image of $u$ in $A/xA$. 
Note that 
$$
A/xA=\dfrac{k[Y,Z,T,W]}{(P(0, Z),Q(0,Y,Z))}=\left(\dfrac{k[Y,Z]}{(P(0, Z),Q(0,Y,Z))}\right)[W, T]=k[\tilde{z}, \tilde{y}, \tilde{w}, \tilde{t}].
$$
Clearly $\iota(f)= \tilde{z}$, $\iota(g)=\tilde{y}$ and $\iota(h)=P'(0,\tilde{z})Q'(0, \tilde{y}, \tilde{z}) \tilde{w}$.
Now by (\ref{starr}), $P'(0,\tilde{z})Q'(0, \tilde{y}, \tilde{z}) \in (A/xA)^{*}$.
Hence, $\iota(E)= k[\tilde{z}, \tilde{y}, \tilde{w}]$ and $A/xA= \iota(E)[\tilde{t}]= \iota(E)^{[1]}$.
Thus $\dim \iota(E)=\dim (A/xA)-1=1$. 
Now as 
$$
E/xE \cong E_1/xE_1 = k[F,G,H]/(P(0,F),Q(0,G,F))= \left(\dfrac{k[F,G]}{(P(0,F),Q(0,G,F))}\right)^{[1]}, 
$$
we have 
$\dim (E/xE)= 1= \dim \iota(E)$. Hence kernel of $\iota$ is $xE$.  
Thus $A^{\phi}=E$. Hence by  (\ref{slice}), we have
$
B_{d,e}[w] =A= {(A^{\phi})}^{[1]}=E^{[1]}\cong E_1^{[1]},
$
i.e., ${B_{d,e}}^{[1]}\cong{B_{d,e-1}}^{[1]}$.
\end{proof}

By Theorems \ref{isomclass} and \ref{stable}, it follows:
\begin{cor}\label{cpd}
Under the hypotheses of Theorem \ref{stable}, for every $d, e \in \bN$, $$B_{d, e}\ncong B_{d, e+1} {\text~but}~~{B_{d, e}}^{[1]}\cong {B_{d, e+1}}^{[1]}.$$ 
Thus the rings $B_{d,e}$ provide counter-examples to the Cancellation Problem.
\end{cor}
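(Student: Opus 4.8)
The plan is to assemble the two preceding theorems: the isomorphism ${B_{d,e}}^{[1]} \cong {B_{d,e+1}}^{[1]}$ is Theorem \ref{stable} after a reindexing, while the non-isomorphism $B_{d,e} \ncong B_{d,e+1}$ is forced by the invariance of the exponent $e$ under isomorphism, which is part (I) of Theorem \ref{isomclass}. There is no deep obstacle here beyond bookkeeping; the two points requiring care are the range of the exponent after shifting and the fact that the regularity hypotheses do not involve $e$.

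First, for the stable isomorphism, I would apply Theorem \ref{stable} to the surface $B_{d,e+1}$, taking its exponent parameter to be $e+1$. Since $e \in \bN$ we have $e+1 \geq 2$, so the hypothesis ``$e \geq 2$'' of Theorem \ref{stable} holds with $e$ there replaced by $e+1$. The crucial observation is that the regularity conditions $(P(0,Z),P'(0,Z))k[Z]=k[Z]$ and $(P(0,Z),Q(0,Y,Z),Q'(0,Y,Z))k[Y,Z]=k[Y,Z]$ depend only on $P$ and $Q$, and not on the exponent $e$; hence they hold verbatim for $B_{d,e+1}$ as well as for $B_{d,e}$. Theorem \ref{stable} then yields ${B_{d,e+1}}^{[1]} \cong {B_{d,e}}^{[1]}$.

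Next, for the non-isomorphism, I would note that the hypotheses $r \geq 2$ and $s \geq 2$ of Theorem \ref{stable} place both $B_{d,e}$ and $B_{d,e+1}$ among the double Danielewski surfaces satisfying condition (\ref{mlc}) of Lemma \ref{mll}, so Theorems \ref{ml} and \ref{isomclass} are applicable to the pair. The two surfaces share the same polynomials $P$ and $Q$, hence the same $d$, the same $r=\deg_Z P$, and the same $s=\deg_Y Q$; their only difference is the exponent, $e$ versus $e+1$. By part (I) of Theorem \ref{isomclass}, any isomorphism between them would force equality of the entire quadruple $(d,e,r,s)$, in particular $e=e+1$, which is absurd. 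Therefore $B_{d,e} \ncong B_{d,e+1}$.

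Finally, combining the two conclusions exhibits, for each $d,e \in \bN$, a pair of finitely generated $k$-algebras that become isomorphic after adjoining one variable yet are themselves non-isomorphic; this is precisely a failure of the cancellation property, so the rings $B_{d,e}$ are counter-examples to the Cancellation Problem. As indicated, the proof is a direct synthesis of Theorems \ref{stable} and \ref{isomclass}, and the only subtlety is verifying that shifting the exponent keeps us both within the applicable range $e+1 \ge 2$ of Theorem \ref{stable} and within the hypotheses that are insensitive to $e$.
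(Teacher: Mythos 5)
Your proposal is correct and follows exactly the paper's route: the paper derives the corollary directly from Theorems \ref{isomclass} and \ref{stable}, using part (I) of Theorem \ref{isomclass} for the non-isomorphism (since $e \ne e+1$) and Theorem \ref{stable} applied with exponent $e+1 \ge 2$ for the stable isomorphism. Your additional observations --- that the regularity hypotheses are insensitive to $e$ and that $r,s \ge 2$ places both surfaces within the scope of condition (\ref{mlc}) --- are exactly the bookkeeping the paper leaves implicit.
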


\medskip
{\bf Acknowledgements.} The authors thank Prof. Amartya Kumar Dutta for suggesting 
several changes to improve the exposition and simplifying the proofs. 
The first author acknowledges Department of Science and Technology for their SwarnaJayanti Fellowship.
The second author acknowledges Council of Scientific and Industrial Research (CSIR) for their research grant.
\medskip

\end{document}